\documentclass[a4paper,11pt]{amsart}

\usepackage[T1]{fontenc}
\usepackage[utf8]{inputenc}
\usepackage{lmodern}
\usepackage[english]{babel}
\usepackage[autostyle]{csquotes}

\usepackage[pdfusetitle,linktocpage,colorlinks=false]{hyperref}
\usepackage{doi}
\usepackage{enumitem}

\usepackage{amsmath,amssymb,amsthm,amscd,mathtools}

\newtheorem{thm}{Theorem}[section]
\newtheorem{prop}[thm]{Proposition}
\newtheorem{lem}[thm]{Lemma}
\newtheorem{cor}[thm]{Corollary}

\newtheorem{defn}[thm]{Definition}
\theoremstyle{remark}
\newtheorem{rem}[thm]{Remark}

\newcommand{\C}{\mathbb C}
\newcommand{\N}{\mathbb N}

\newcommand{\ran}{\operatorname{ran}}
\newcommand{\rank}{\operatorname{rank}}
\newcommand{\Prop}{\operatorname{prop}}

\newcommand{\supp}{\operatorname{supp}}
\newcommand{\spann}{\operatorname{span}}
\newcommand{\clspan}[1]{\overline{\operatorname{span}}\, #1}
\newcommand{\Bup}{B_u^p(X)}
\newcommand{\UBp}{UB^p(X)}

\newcommand{\Ind}{\operatorname{Ind}}

\begin{document}

\title{Morita Induction and the Preservation of Geometric and Dynamical Ideals}
\author{Yeong Chyuan Chung}
\address{School of Mathematics, Jilin University, Changchun 130012, Jilin, P. R. China}
\email{chungyc@jlu.edu.cn}
\author{Xinhui Du}
\address{School of Mathematics, Jilin University, Changchun 130012, Jilin, P. R. China}
\email{xhdu24@mails.jlu.edu.cn}
\date{\today}

\subjclass[2020]{46H25, 46H10, 47L10, 51F30, 22A22}
\keywords{Morita induction, ideal correspondence, geometric ideal,
dynamical ideal, $L^p$-operator algebra}

\begin{abstract}
Given Banach algebras with bounded approximate identities that are Morita
equivalent in the sense of Paravicini, we give explicit formulas for the
induced correspondence between their closed two-sided ideals, characterize
when corresponding ideals are Morita equivalent through the restricted
bimodules, and show that corresponding quotients are Morita equivalent.
We then isolate two mechanisms for preserving distinguished classes of
ideals: compatible dense algebraic cores and localized Morita submodules.
The latter shows that, for equivalent locally compact Hausdorff étale
groupoids with paracompact unit spaces, the associated Morita equivalence
of reduced groupoid $L^p$-operator algebras preserves dynamical ideals.
The former applies to the Morita equivalence between the $\ell^p$ uniform
Roe algebra and the $\ell^p$ uniform algebra of a bounded geometry metric
space, identifying their geometric ideals. As a technical ingredient, we
prove that the $\ell^p$ uniform algebra has a bounded approximate identity
of projections.
\end{abstract}

\maketitle
\tableofcontents

\section{Introduction}

Morita equivalence is a general mechanism for transporting structural information between algebras.  For $C^*$-algebras, Rieffel's theory of strong Morita equivalence \cite{Rieffel1974Induced,Rieffel1974Morita} has become an indispensable tool in operator algebras.  For Banach algebras, Gr{\o}nb{\ae}k developed a categorical Morita theory for algebras with bounded approximate identities \cite{Gronbaek1995}.  A Banach pair formulation suited to $KK^{\mathrm{ban}}$-theory was developed systematically by Paravicini, together with its relation to Banach algebra $K$-theory \cite{Paravicini2009Morita}; see also \cite{Paravicini2009Induction,Paravicini2015}.

In a previous paper \cite{Chung2025}, the first author proved that, for a metric space $X$ with bounded geometry and $1\leq p<\infty$, the $\ell^p$ uniform Roe algebra $B_u^p(X)$ and the $\ell^p$ uniform algebra $UB^p(X)$ are Morita equivalent in the sense of Paravicini.  This
motivates the present study of ideal induction for Morita equivalent
Banach algebras.  The existence of an ideal correspondence is natural
from general Morita theory.  Our emphasis is instead on concrete formulas
in the Banach pair language and on criteria that allow additional
structure on distinguished classes of ideals to be transported.  This is
particularly relevant for geometric ideals in Roe-type algebras, whose
definition involves a specified dense algebraic core and is therefore
not, by itself, an abstract Morita-invariant notion.

We first develop the required ideal induction in an abstract Banach
algebra setting.  For a closed ideal $I\triangleleft B$, we identify the
closed submodules cut out by $I$ and show that they are precisely the norm
closures of the natural module products with $I$.  This gives explicit
formulas for the induced ideal $\Ind_B^A(I)\triangleleft A$ and leads to a direct proof
that Morita induction is an order isomorphism between the closed ideal
lattices of $A$ and $B$.  If $I\triangleleft B$ corresponds to
$J\triangleleft A$, the closed linear spans of the restricted pairing
ranges are exactly $I^2$ and $J^2$.  Consequently, in the nondegenerate
Morita theory used throughout the paper, the canonical restricted
bimodules implement a Morita equivalence between the corresponding
ideals exactly when they are square-dense.  We also note that
Paravicini's later definition for possibly degenerate Banach algebras
removes this restriction \cite{Paravicini2015}.  Finally, corresponding
quotient algebras are always Morita equivalent.

The first mechanism for preserving additional ideal structure uses
dense algebraic cores.  If $A_0\subseteq A$ and $B_0\subseteq B$ are
distinguished dense subalgebras, we call a closed ideal
$J\triangleleft A$ $A_0$-geometric when $J\cap A_0$ is dense in $J$.
We formulate a notion of compatible dense Morita subcontext and prove
that, under this hypothesis,
\[
 I\text{ is }B_0\text{-geometric}
 \quad\Longleftrightarrow\quad
 \Ind_B^A(I)\text{ is }A_0\text{-geometric}.
\]
The point is that compatibility of the Morita bimodules with the two
dense cores turns geometricity, which is additional structure beyond the
abstract ideal lattice, into a property preserved by induction.  The
proof becomes elementary once the appropriate ideal induction formulas
are available, and the formulation separates the Morita-theoretic
argument from the special features of Roe-type algebras.

A second preservation mechanism is useful when the distinguished ideals
are more naturally detected by their action on the Morita bimodule.
Writing ${}_A E^{>}_B$ for the $A$--$B$ bimodule in the Morita
equivalence, if $J\triangleleft A$ and $I\triangleleft B$ satisfy
\[
 \overline{J E^{>}}=\overline{E^{>}I},
\]
then $J$ and $I$ correspond under Morita induction.  We call this the
localized submodule criterion.  Its family version transports order and
lattice structures whenever the distinguished ideals admit compatible
parametrizations.  This criterion applies, for example, to dynamical
ideals associated with equivalent groupoids, where corresponding
dynamical ideals generate the same localized piece of the Morita
bimodule; see \cite{ChungDelfinWang2026}.

We then apply the dense core mechanism to the Morita equivalence between
$B_u^p(X)$ and $UB^p(X)$.  One technical issue must first be addressed.
The ideal induction theory used here assumes bounded approximate
identities.  The algebra $B_u^p(X)$ is unital, whereas $UB^p(X)$ is
usually nonunital.  We prove that $UB^p(X)$ has a bounded two-sided
approximate identity consisting of projections, thereby verifying the bounded approximate identity hypothesis
required by the ideal induction results above.

The projection construction also illustrates a Banach space feature of
the problem.  For $p=2$, a finite collection of finite-rank operators is
absorbed on both sides by the orthogonal projection onto the sum of their
ranges and the ranges of their adjoints.  For $p\neq2$, the information
controlling the two sides naturally lies on $\ell^p$ and its dual.  We
replace orthogonal projections by uniformly bounded finite-rank Banach
space projections, using the uniform projection property of $L^p$-spaces
due to Pe\l czy\'nski and Rosenthal
\cite{PelczynskiRosenthal1975}; see also
\cite[Section~9]{Heinrich1980} and
\cite[Section~7]{Casazza2001}.  A quantitative consequence of the
projection construction in \cite[Proposition~1.1]{Johnson1980} gives
simultaneous control on both sides, together with the uniform rank bound
needed for the algebraic uniform algebra.  Compare
\cite[Lemma~4.2]{Lusky2003} for the corresponding two-sided absorption
result without this quantitative rank estimate.

For comparison, in the $C^*$-setting Higson and Roe proved that Roe
$C^*$-algebras admit approximate identities of projections
\cite{HigsonRoe1994}.  For the standard discrete Roe module
$\ell^2(X,\ell^2)=\bigoplus_{x\in X}\ell^2$, Ewert and Meyer gave a
particularly transparent construction using diagonal projections whose
restrictions to the $\ell^2$-summands have finite rank
\cite[Proposition~2.19]{EwertMeyer2019}.  In the dense subalgebra $U\C^p[X]\subset UB^p(X)$, however, the
ranks of the matrix entries must be uniformly bounded, which accounts
for the quantitative Banach space input above.

Finally, the algebraic finite-propagation bimodules in
\cite{Chung2025} form a compatible dense subcontext for
\[
 A_0=U\C^p[X]\subseteq UB^p(X),
 \qquad
 B_0=\C_u[X]\subseteq B_u^p(X).
\]
Morita induction therefore gives an order isomorphism between the
geometric ideals of $B_u^p(X)$ and those of $UB^p(X)$.  Combining this
with the geometric ideal classification in our companion paper
\cite{ChungDu2026} identifies the geometric ideal lattice of $UB^p(X)$
with the lattice of ideals of the bounded coarse structure of $X$.

The paper is organized as follows.  Section~2 develops ideal induction for Morita equivalent Banach algebras with bounded approximate identities.  Section~3 proves the dense core preservation theorem, and Section~4 establishes the localized submodule criterion.  Section~5 applies the latter to dynamical ideals of reduced groupoid $L^p$-operator algebras.  Section~6 constructs a bounded approximate identity of projections for $UB^p(X)$.  Section~7 applies the dense core theorem to geometric ideals of the two $\ell^p$ Roe-type algebras.

\section{Morita equivalence and induction of ideals}

Throughout the paper, all Banach spaces and Banach algebras are complex.  All Banach module actions are assumed to be contractive. Whenever products of two linear subspaces are defined by an algebra
multiplication or module action, we use juxtaposition for their
algebraic linear span.  Thus, for example,
\[
 E^{>}I=\spann\{\xi i:\xi\in E^{>},\ i\in I\},
 \qquad
 JE^{>}=\spann\{j\xi:j\in J,\ \xi\in E^{>}\}.
\]
Accordingly, $\overline{E^{>}I}$ and $\overline{JE^{>}}$ denote the
corresponding norm-closed linear spans. We use the Banach pair notion of Morita equivalence in the form developed by Paravicini; see \cite{Paravicini2009Morita,Chung2025}.  We recall only the structure needed below.

\subsection{Morita contexts and bounded approximate identities}

\begin{defn}\label{def:morita}
Let $A$ and $B$ be Banach algebras.  A \emph{Morita equivalence} between $A$ and $B$ consists of Banach bimodules
\[
 {}_B E_A^{<},\qquad {}_A E_B^{>},
\]
and continuous bilinear pairings
\[
 \langle\cdot,\cdot\rangle_B:E^{<}\times E^{>}\longrightarrow B,
 \qquad
 {}_A\langle\cdot,\cdot\rangle:E^{>}\times E^{<}\longrightarrow A,
\]
with norm at most one, satisfying the usual module covariance and balancing identities
\begin{align}
\langle be^{<},e^{>} \rangle_B = b\langle e^{<},e^{>} \rangle _B, &\quad \langle e^{<},e^{>}b \rangle_B = \langle e^{<},e^{>} \rangle_Bb, \notag\\
{}_A\langle ae^{>},e^{<} \rangle = a{}_A\langle e^{>},e^{<} \rangle, &\quad {}_A\langle e^{>},e^{<}a \rangle = {}_A\langle e^{>},e^{<} \rangle a, \notag\\
 \langle e^{<}a,e^{>}\rangle_B&=\langle e^{<},ae^{>}\rangle_B,
 \label{eq:A-balance}\tag{2.1}\\
 {}_A\langle e^{>}b,e^{<}\rangle&={}_A\langle e^{>},be^{<}\rangle,
 \label{eq:B-balance}\tag{2.2}
\end{align}
for $e^{<}\in E^{<}$, $e^{>}\in E^{>}$, $a\in A$ and $b\in B$, together with
\begin{align*}
 \langle e^{<},e^{>}\rangle_B f^{<}
 &=e^{<}\,{}_A\langle e^{>},f^{<}\rangle,\\
 e^{>}\langle f^{<},f^{>}\rangle_B
 &={}_A\langle e^{>},f^{<}\rangle f^{>},
\end{align*}
for $e^{<},f^{<}\in E^{<}$ and $e^{>},f^{>}\in E^{>}$.  We also require both pairings to be full,
\[
 \clspan{\langle E^{<},E^{>}\rangle_B}=B,
 \qquad
 \clspan{{}_A\langle E^{>},E^{<}\rangle}=A,
\]
and all four module actions to be nondegenerate.  Here nondegeneracy means that the linear span of the corresponding module products is dense; for example, $\overline{AE^{>}}=E^{>}$ and $\overline{E^{>}B}=E^{>}$. 
\end{defn}

\begin{defn}
A net $(u_\lambda)$ in a Banach algebra $A$ is a \emph{bounded approximate identity} if
\[
 \sup_\lambda\|u_\lambda\|<\infty,
 \qquad
 u_\lambda a\to a,
 \qquad
 au_\lambda\to a
\]
for every $a\in A$.
\end{defn}

We use the following standard observation repeatedly.

\begin{lem}\label{lem:bai-module}
Let $A$ have a bounded approximate identity $(u_\lambda)$.
\begin{enumerate}
\item If $E$ is a nondegenerate left Banach $A$-module, then $u_\lambda e\to e$ for every $e\in E$.
\item If $F$ is a nondegenerate right Banach $A$-module, then $f u_\lambda\to f$ for every $f\in F$.
\end{enumerate}
\end{lem}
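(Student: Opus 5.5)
The plan is the standard density-plus-uniform-boundedness argument. I treat part (1) in detail; part (2) follows by the mirror-image argument with right actions. Set $M=\sup_\lambda\|u_\lambda\|<\infty$. Since the module action is contractive, each operator $e\mapsto u_\lambda e$ on $E$ has norm at most $M$. The family $\{e\mapsto u_\lambda e\}$ is therefore uniformly bounded, and it suffices to prove $u_\lambda e\to e$ for $e$ in a dense subset of $E$.

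By nondegeneracy, the linear span $AE=\spann\{ae:a\in A,\ e\in E\}$ is dense in $E$. For an elementary product $ae$, module associativity gives
\[
 \|u_\lambda(ae)-ae\|=\|(u_\lambda a-a)e\|\le\|u_\lambda a-a\|\,\|e\|\longrightarrow0,
\]
using contractivity of the action and $u_\lambda a\to a$. Convergence then holds for finite linear combinations of such products, that is, on all of $AE$.

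Now take a general $e\in E$ and $\varepsilon>0$. Choose $e_0\in AE$ with $\|e-e_0\|<\varepsilon$. Then
\[
 \|u_\lambda e-e\|\le\|u_\lambda(e-e_0)\|+\|u_\lambda e_0-e_0\|+\|e_0-e\|\le(M+1)\varepsilon+\|u_\lambda e_0-e_0\|.
\]
The last term is eventually below $\varepsilon$, so $\limsup_\lambda\|u_\lambda e-e\|\le(M+2)\varepsilon$. Since $\varepsilon$ was arbitrary, $u_\lambda e\to e$.

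For part (2), use the right-hand estimate $\|f a u_\lambda - fa\|\le\|f\|\,\|au_\lambda-a\|$ together with density of $FA$; the rest of the argument is unchanged. The only point needing care is that the net is indexed by a directed set rather than a sequence. This causes no difficulty, because the bound $M$ is uniform in $\lambda$.
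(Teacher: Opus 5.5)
Your proposal is correct and follows the same route as the paper's proof. Both show convergence on the dense span $AE$ via $\|(u_\lambda a-a)e\|\le\|u_\lambda a-a\|\,\|e\|$, then extend to all of $E$ using the uniform bound on the operators $e\mapsto u_\lambda e$; your explicit $\varepsilon$-estimate just spells out the extension step that the paper states in one line.
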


\begin{proof}
We prove (i); the other case is analogous.  The assertion is immediate on the dense subspace $AE$. Indeed, if
\[
 e=\sum_{k=1}^n a_k f_k,
\]
then
\[
 \|u_\lambda e-e\|
 \leq
 \sum_{k=1}^n
 \|(u_\lambda a_k-a_k)f_k\|
 \leq
 \sum_{k=1}^n
 \|u_\lambda a_k-a_k\|\,\|f_k\|
 \longrightarrow0.
\]
The operators $e\mapsto u_\lambda e$ are uniformly bounded, so the convergence extends from the dense subspace $AE$ to all of $E$.
\end{proof}

\subsection{Submodules associated to an ideal}

For the rest of this section, assume that $A$ and $B$ have bounded approximate identities and that $(E^{<},E^{>})$ is a Morita equivalence between them.  We write $\mathcal I(A)$ for the complete lattice of closed two-sided ideals of a Banach algebra $A$, ordered by inclusion.

Let $I\triangleleft B$ be a closed two-sided ideal.  Define
\begin{align*}
 E_I^{>}
 &:={\bigl\{\xi\in E^{>}:\langle\eta,\xi\rangle_B\in I
 \text{ for all }\eta\in E^{<}\bigr\}},\\
 {}_I E^{<}
 &:={\bigl\{\eta\in E^{<}:\langle\eta,\xi\rangle_B\in I
 \text{ for all }\xi\in E^{>}\bigr\}}.
\end{align*}
These are closed $A$-$B$ and $B$-$A$ submodules, respectively.

\begin{prop}\label{prop:ideal-submodules}
For every closed ideal $I\triangleleft B$,
\[
 E_I^{>} = \overline{E^{>}I},
 \qquad
 {}_I E^{<}=\overline{I E^{<}}.
\]
\end{prop}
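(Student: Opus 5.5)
We prove the first equality, $E_I^{>}=\overline{E^{>}I}$; the second follows by the symmetric argument with left and right interchanged.

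\emph{Easy inclusion.} Let $\xi\in E^{>}$ and $i\in I$. By the right covariance of $\langle\cdot,\cdot\rangle_B$,
\[
\langle\eta,\xi i\rangle_B=\langle\eta,\xi\rangle_B\, i\in I\qquad\text{for every }\eta\in E^{<},
\]
since $I$ is a two-sided ideal. Hence $\xi i\in E_I^{>}$. Finite sums stay in $E_I^{>}$, and $E_I^{>}$ is closed because $I$ is closed and the pairing is continuous. Therefore $\overline{E^{>}I}\subseteq E_I^{>}$.

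\emph{Reverse inclusion: reduction to approximation by $A$.} Let $\xi\in E_I^{>}$. The idea is to approximate $\xi$ through the $A$-action and then move the $A$-pairing across to a $B$-pairing, which lands in $I$.

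Let $(a_\lambda)$ be a bounded approximate identity of $A$. The left $A$-action on $E^{>}$ is nondegenerate, so Lemma~\ref{lem:bai-module} gives $a_\lambda\xi\to\xi$. It therefore suffices to show $a\xi\in\overline{E^{>}I}$ for every $a\in A$.

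Fullness says $\clspan{{}_A\langle E^{>},E^{<}\rangle}=A$. The map $a\mapsto a\xi$ is bounded, and $\overline{E^{>}I}$ is a closed subspace. So by linearity and continuity it is enough to treat $a={}_A\langle e^{>},e^{<}\rangle$ with $e^{>}\in E^{>}$ and $e^{<}\in E^{<}$.

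\emph{Key step: moving the pairing across.} For such $a$, the compatibility identity $e^{>}\langle f^{<},f^{>}\rangle_B={}_A\langle e^{>},f^{<}\rangle f^{>}$ gives
\[
{}_A\langle e^{>},e^{<}\rangle\,\xi=e^{>}\,\langle e^{<},\xi\rangle_B .
\]
Since $\xi\in E_I^{>}$, we have $\langle e^{<},\xi\rangle_B\in I$. Hence ${}_A\langle e^{>},e^{<}\rangle\,\xi$ lies in $E^{>}I$, which completes the reverse inclusion.

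\emph{Where the difficulty lies.} The only delicate point is the reduction to $a\xi$ with $a$ a single pairing value. It needs two ingredients: the bounded approximate identity of $A$ acting nondegenerately on $E^{>}$, supplied by Lemma~\ref{lem:bai-module}, and fullness of the $A$-valued pairing, which lets us pass from general $a$ to elementary pairings by density.

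An approximate identity in $B$ would not help here. Writing $\xi=\lim \xi b_\mu$ does not put $\xi$ into $E^{>}I$, because $b_\mu$ need not lie in $I$. The argument must go through the $A$-side and the compatibility identity.

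For ${}_IE^{<}=\overline{IE^{<}}$, the same steps apply with sides reversed. The easy inclusion uses $\langle i\eta,\xi\rangle_B=i\langle\eta,\xi\rangle_B\in I$. For the reverse inclusion, approximate $\eta$ by $\eta a_\lambda$ using Lemma~\ref{lem:bai-module}(ii), and use the identity
\[
\eta\,{}_A\langle e^{>},e^{<}\rangle=\langle\eta,e^{>}\rangle_B\,e^{<}\in IE^{<}.
\]
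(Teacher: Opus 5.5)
Your proposal is correct and follows essentially the same route as the paper. Both arguments get the easy inclusion from right $B$-linearity of the pairing. For the reverse inclusion, both use the bounded approximate identity of $A$ (via Lemma~\ref{lem:bai-module}), fullness of the $A$-valued pairing, and the compatibility identity ${}_A\langle e^{>},e^{<}\rangle\xi=e^{>}\langle e^{<},\xi\rangle_B$ to land in $E^{>}I$, with the symmetric argument handling ${}_IE^{<}$.
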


\begin{proof}
If $\xi\in E^{>}$ and $b\in I$, then
\[
 \langle\eta,\xi b\rangle_B=\langle\eta,\xi\rangle_Bb\in I,
\]
so $E^{>}I\subseteq E_I^{>}$.  Hence $\overline{E^{>}I}\subseteq E_I^{>}$.

For the reverse inclusion, let $\xi\in E_I^{>}$.  Let $(u_\lambda)$ be a bounded approximate identity for $A$.  By Lemma~\ref{lem:bai-module}, $u_\lambda\xi\to\xi$.  By fullness of the $A$-valued pairing, each $u_\lambda$ can be approximated in norm by a finite sum
\[
 a=\sum_{k=1}^n {}_A\langle\xi_k,\eta_k\rangle,
 \qquad \xi_k\in E^{>},\ \eta_k\in E^{<}.
\]
Then
\[
 a\xi
 =\sum_{k=1}^n {}_A\langle\xi_k,\eta_k\rangle\xi
 =\sum_{k=1}^n \xi_k\langle\eta_k,\xi\rangle_B.
\]
Since $\xi\in E_I^{>}$, each coefficient $\langle\eta_k,\xi\rangle_B$ belongs to $I$.  Thus $a\xi\in E^{>}I$.  Approximating $u_\lambda$ sufficiently well and then letting $\lambda$ tend to infinity gives $\xi\in\overline{E^{>}I}$.

The proof of ${}_I E^{<}=\overline{I E^{<}}$ is symmetric.  Indeed, if $\eta\in{}_I E^{<}$, then $\eta u_\lambda\to\eta$, and for
$a=\sum_k{}_A\langle\xi_k,\eta_k\rangle$ one has
\[
 \eta a
 =\sum_k \eta\,{}_A\langle\xi_k,\eta_k\rangle
 =\sum_k \langle\eta,\xi_k\rangle_B\eta_k\in I E^{<}.
\]
\end{proof}

The original ideal can be recovered from either of these submodules.

\begin{prop}\label{prop:recover-I}
For every closed ideal $I\triangleleft B$,
\[
 I=\clspan{\langle E^{<},E_I^{>}\rangle_B}
  =\clspan{\langle{}_I E^{<},E^{>}\rangle_B}.
\]
\end{prop}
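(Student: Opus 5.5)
We prove the first equality $I=\clspan{\langle E^{<},E_I^{>}\rangle_B}$ by two inclusions; the second equality follows by the symmetric argument with ${}_I E^{<}$ in place of $E_I^{>}$.

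The inclusion $\clspan{\langle E^{<},E_I^{>}\rangle_B}\subseteq I$ holds by the definition of $E_I^{>}$, since $I$ is a closed subspace. Likewise, $\langle{}_I E^{<},E^{>}\rangle_B\subseteq I$ holds by the definition of ${}_I E^{<}$.

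For the reverse inclusion, fix $b\in I$ and a bounded approximate identity $(v_\mu)$ of $B$, so that $v_\mu b\to b$. By fullness of the $B$-valued pairing, approximate $v_\mu$ in norm by a finite sum $c=\sum_k\langle\eta_k,\xi_k\rangle_B$ with $\eta_k\in E^{<}$ and $\xi_k\in E^{>}$. Covariance gives
\[
 cb=\sum_k\langle\eta_k,\xi_k\rangle_B\,b=\sum_k\langle\eta_k,\xi_k b\rangle_B .
\]
Each $\xi_k b$ lies in $E^{>}I\subseteq E_I^{>}$ by Proposition~\ref{prop:ideal-submodules}, so $cb\in\spann\langle E^{<},E_I^{>}\rangle_B$. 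Since $\|v_\mu b-cb\|\le\|v_\mu-c\|\,\|b\|$, we first choose $c$ close to $v_\mu$ and then let $\mu\to\infty$. This shows $b\in\clspan{\langle E^{<},E_I^{>}\rangle_B}$.

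For the second equality we multiply on the right instead. Approximate $b v_\mu$ by $bc=\sum_k b\langle\eta_k,\xi_k\rangle_B=\sum_k\langle b\eta_k,\xi_k\rangle_B$. Here $b\eta_k\in IE^{<}\subseteq{}_I E^{<}$, again by Proposition~\ref{prop:ideal-submodules}.

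There is no real obstacle. The only point needing care is the order of approximation: first fix $\mu$, then approximate $v_\mu$ by an element of the span of the pairing. This works because the error is controlled by $\|b\|$ uniformly, and no bound on the approximating sums themselves is needed.
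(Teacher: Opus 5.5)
Your proposal is correct and follows the paper's proof essentially verbatim: you approximate $v_\mu$ (respectively use $bv_\mu\to b$) by finite sums from the full $B$-valued pairing, then move $b$ inside via covariance, using $E^{>}I\subseteq E_I^{>}$ and $IE^{<}\subseteq{}_I E^{<}$. Your remark on the order of approximation is exactly the implicit point in the paper's argument.
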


\begin{proof}
Both right-hand sides are contained in $I$ by definition.  Let $(v_\lambda)$ be a bounded approximate identity for $B$ and let $b\in I$.  Since $v_\lambda b\to b$ and the $B$-valued pairing is full, we may approximate $v_\lambda$ by finite sums
\[
 c=\sum_{k=1}^n\langle\eta_k,\xi_k\rangle_B.
\]
Then
\[
 cb=\sum_{k=1}^n\langle\eta_k,\xi_kb\rangle_B,
\]
and $\xi_kb\in E^{>}I\subseteq E_I^{>}$.  It follows that $b$ belongs to the first closed span.  The second equality follows in the same way from $bv_\lambda\to b$, using
\[
 b\langle\eta_k,\xi_k\rangle_B
 =\langle b\eta_k,\xi_k\rangle_B,
 \qquad b\eta_k\in I E^{<}\subseteq{}_I E^{<}.
\]
\end{proof}

\subsection{Induced ideals and the lattice correspondence}

\begin{defn}\label{def:induction}
For a closed ideal $I\triangleleft B$, define
\[
 \Ind_B^A(I)
 :=\clspan{{}_A\langle E_I^{>},E^{<}\rangle}.
\]
\end{defn}

The next proposition gives the formulas that will be used later.

\begin{prop}\label{prop:induction-formulas}
Let $I\triangleleft B$ and set $J=\Ind_B^A(I)$.  Then
\begin{align}
 J
 &=\clspan{{}_A\langle E^{>}I,E^{<}\rangle}
 =\clspan{{}_A\langle E^{>},I E^{<}\rangle}
 =\clspan{{}_A\langle E^{>},{}_I E^{<}\rangle}.
 \label{eq:ind-formulas}\tag{2.3}
\end{align}
In particular, $J$ is a closed two-sided ideal of $A$.
\end{prop}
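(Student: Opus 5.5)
The four closed spans in \eqref{eq:ind-formulas} will be identified in two pairs, and the two pairs will then be linked by the balancing identity \eqref{eq:B-balance}. Throughout we use that a bounded bilinear pairing maps closures into closures. Hence if $S\subseteq E^{>}$ has closure $\overline{S}$, then
\[
\clspan{{}_A\langle \overline{S},E^{<}\rangle}=\clspan{{}_A\langle S,E^{<}\rangle},
\]
and the analogous statement holds in the second variable.

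For the first pair, Proposition~\ref{prop:ideal-submodules} gives $E_I^{>}=\overline{E^{>}I}$. The remark above then yields
\[
J=\clspan{{}_A\langle E_I^{>},E^{<}\rangle}=\clspan{{}_A\langle E^{>}I,E^{<}\rangle}.
\]
For the second pair, the same proposition gives ${}_IE^{<}=\overline{IE^{<}}$, so
\[
\clspan{{}_A\langle E^{>},IE^{<}\rangle}=\clspan{{}_A\langle E^{>},{}_IE^{<}\rangle}.
\]

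To link the two pairs, take $\xi\in E^{>}$, $b\in I$ and $\eta\in E^{<}$. By \eqref{eq:B-balance},
\[
{}_A\langle \xi b,\eta\rangle={}_A\langle \xi,b\eta\rangle.
\]
Both generating sets $\{{}_A\langle \xi b,\eta\rangle\}$ and $\{{}_A\langle \xi,b\eta\rangle\}$ are therefore literally the same set. By bilinearity, elements of $E^{>}I$ and $IE^{<}$ are finite sums of such products, so the two linear spans coincide and so do their closures. This proves the second equality in \eqref{eq:ind-formulas}.

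It remains to see that $J$ is a closed two-sided ideal. It is closed by definition. For $a\in A$, the covariance rules give
\[
a\,{}_A\langle\xi,\eta\rangle={}_A\langle a\xi,\eta\rangle,
\qquad
{}_A\langle\xi,\eta\rangle a={}_A\langle\xi,\eta a\rangle.
\]
Using the formula $J=\clspan{{}_A\langle E_I^{>},E^{<}\rangle}$ and the fact that $E_I^{>}$ is a left $A$-submodule, so that $a\xi\in E_I^{>}$, the generators are sent into $J$. Continuity of multiplication then extends this to the closed span. Invariance on the right uses $E^{<}A\subseteq E^{<}$.

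The only point requiring care is the closure remark, namely that replacing a set by its closure in one slot does not change the closed span of pairings. This follows from the continuity bound $\|{}_A\langle\xi,\eta\rangle\|\le\|\xi\|\|\eta\|$, so I expect no real obstacle. If the closed span is not treated carefully, the step from the algebraic spans to $E_I^{>}$ is where an error could slip in, and the continuity estimate settles it.
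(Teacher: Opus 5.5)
Your proposal is correct and follows the paper's proof essentially step for step: Proposition~\ref{prop:ideal-submodules} together with continuity of the pairing for the outer equalities, the balancing identity \eqref{eq:B-balance} for the middle one, and module covariance for the ideal property. Your appeal to the fact that $E_I^{>}$ is a left $A$-submodule, which follows from \eqref{eq:A-balance}, simply spells out the invariance check that the paper leaves implicit.
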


\begin{proof}
The first equality follows from Proposition~\ref{prop:ideal-submodules} and continuity of the pairing.  By the $B$-balance identity \eqref{eq:B-balance},
\[
 {}_A\langle\xi b,\eta\rangle
 ={}_A\langle\xi,b\eta\rangle
 \qquad(\xi\in E^{>},\ \eta\in E^{<},\ b\in I),
\]
so the first two closed spans are equal.  Proposition~\ref{prop:ideal-submodules} gives the third.

The module covariance of the $A$-valued pairing shows that this common closed span is invariant under left and right multiplication by $A$.
\end{proof}

There is a useful module identity behind the inverse correspondence.

\begin{lem}\label{lem:induced-module}
Let $I\triangleleft B$ and $J=\Ind_B^A(I)$.  Then
\[
 \overline{J E^{>}}=E_I^{>},
 \qquad
 \overline{E^{<}J}={}_I E^{<}.
\]
\end{lem}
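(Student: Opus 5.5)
We prove $\overline{JE^{>}}=E_I^{>}$ by two inclusions; the statement for $E^{<}$ is symmetric.

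For the inclusion $\overline{JE^{>}}\subseteq E_I^{>}$, it suffices by continuity and linearity to treat a generator of $J$. By Proposition~\ref{prop:induction-formulas}, $J=\clspan{{}_A\langle E^{>}I,E^{<}\rangle}$, and $E_I^{>}$ is a closed subspace. So we only need to check that ${}_A\langle \xi b,\eta\rangle\zeta\in E_I^{>}$ for $\xi,\zeta\in E^{>}$, $\eta\in E^{<}$ and $b\in I$. The second compatibility identity gives
\[
 {}_A\langle \xi b,\eta\rangle\zeta=\xi b\langle\eta,\zeta\rangle_B .
\]
Since $b\langle\eta,\zeta\rangle_B\in I$, this element lies in $E^{>}I\subseteq E_I^{>}$.

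For the reverse inclusion $E_I^{>}\subseteq\overline{JE^{>}}$, we use Proposition~\ref{prop:ideal-submodules}, which gives $E_I^{>}=\overline{E^{>}I}$. It therefore suffices to show $\xi b\in\overline{JE^{>}}$ for $\xi\in E^{>}$ and $b\in I$. Let $(v_\lambda)$ be a bounded approximate identity for $B$. Then $b v_\lambda\to b$, so $\xi b v_\lambda\to\xi b$. By fullness of the $B$-valued pairing, we approximate $v_\lambda$ in norm by finite sums $c=\sum_k\langle\eta_k,\zeta_k\rangle_B$. Applying the compatibility identity again,
\[
 \xi b c=\sum_k (\xi b)\langle\eta_k,\zeta_k\rangle_B=\sum_k{}_A\langle\xi b,\eta_k\rangle\zeta_k .
\]
Each ${}_A\langle\xi b,\eta_k\rangle$ lies in $J$ by \eqref{eq:ind-formulas}, so $\xi bc\in JE^{>}$. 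The estimate $\|\xi b c-\xi b v_\lambda\|\le\|\xi b\|\,\|c-v_\lambda\|$ then lets us approximate first $v_\lambda$ by $c$ and then let $\lambda\to\infty$, as in the proof of Proposition~\ref{prop:recover-I}.

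For the left module statement $\overline{E^{<}J}={}_IE^{<}$, we argue symmetrically using $J=\clspan{{}_A\langle E^{>},IE^{<}\rangle}$ and the first compatibility identity
\[
 \eta\,{}_A\langle\xi,b\eta'\rangle=\langle\eta,\xi\rangle_B\, b\eta'\in IE^{<} .
\]
For the reverse inclusion, we write $b\eta=\lim v_\lambda b\eta$ and
\[
 c\,b\eta=\sum_k\eta_k\,{}_A\langle\zeta_k,b\eta\rangle\in E^{<}J ,
\]
which uses ${}_IE^{<}=\overline{IE^{<}}$ from Proposition~\ref{prop:ideal-submodules}.

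The only delicate point is choosing the right form of $J$ from \eqref{eq:ind-formulas}, namely the form that places the $I$-factor adjacent to the compatibility identity, together with the order of the two approximations. Both are handled as in Propositions~\ref{prop:ideal-submodules} and~\ref{prop:recover-I}, so no real obstacle is expected.
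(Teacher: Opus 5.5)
Your proof is correct and follows the paper's argument. For the forward inclusion you use the compatibility identity ${}_A\langle\xi,\eta\rangle\zeta=\xi\langle\eta,\zeta\rangle_B$, and for the reverse inclusion you approximate a bounded approximate identity of $B$ by finite sums from the full $B$-valued pairing. The only difference is that you detour through $E_I^{>}=\overline{E^{>}I}$ and the formulas \eqref{eq:ind-formulas}, while the paper works directly with $\xi\in E_I^{>}$ and the definition $J=\clspan{{}_A\langle E_I^{>},E^{<}\rangle}$, which makes that detour unnecessary.
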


\begin{proof}
For $\xi\in E_I^{>}$, $\eta\in E^{<}$ and $\zeta\in E^{>}$, the Morita compatibility relation gives 
\[
 {}_A\langle\xi,\eta\rangle\zeta
 =\xi\langle\eta,\zeta\rangle_B\in E_I^{>}.
\]
Thus every element in the linear span of
${}_A\langle E_I^{>},E^{<}\rangle$ maps $E^{>}$ into
$E_I^{>}$.  Since $E_I^{>}$ is closed and the module action is
continuous, the same holds for its norm closure $J$.  Hence $ J E^{>}\subseteq E_I^{>}$, and so $\overline{JE^{>}}\subseteq E_I^{>}$.

Conversely, let $\xi\in E_I^{>}$.  Let $(v_\lambda)$ be a bounded approximate identity of $B$.  Since $E^{>}$ is a nondegenerate right $B$-module, $\xi v_\lambda\to\xi$.  Approximate $v_\lambda$ by a finite sum $\sum_k\langle\eta_k,\zeta_k\rangle_B$.  Then
\[
 \xi\sum_k\langle\eta_k,\zeta_k\rangle_B
 =\sum_k{}_A\langle\xi,\eta_k\rangle\zeta_k\in J E^{>}.
\]
Hence $E_I^{>}\subseteq\overline{J E^{>}}$.  The second identity is symmetric.
\end{proof}

By interchanging $A$ and $B$, and $E^{>}$ and $E^{<}$, Definition~\ref{def:induction} gives a map $\Ind_A^B$ from closed ideals of $A$ to closed ideals of $B$.

\begin{thm}[Morita induction of ideals]\label{thm:ideal-lattice}
Let $A$ and $B$ be Banach algebras with bounded approximate identities, and let $(E^{<},E^{>})$ be a Morita equivalence between them.  Then
\[
 \Ind_B^A:\mathcal I(B)\longrightarrow\mathcal I(A)
\]
is an order isomorphism, with inverse $\Ind_A^B$.  Hence it is an isomorphism of complete lattices of closed two-sided ideals.
\end{thm}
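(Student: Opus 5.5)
The plan is to show that $\Ind_A^B\circ\Ind_B^A=\mathrm{id}_{\mathcal I(B)}$ and $\Ind_B^A\circ\Ind_A^B=\mathrm{id}_{\mathcal I(A)}$, and then that both maps are monotone. A monotone bijection with monotone inverse is an order isomorphism. An order isomorphism between complete lattices automatically preserves arbitrary joins and meets, which gives the final sentence of the statement. By the symmetry of the setup (interchanging $A,B$ and $E^{>},E^{<}$), it suffices to prove one composite identity and monotonicity of $\Ind_B^A$.

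Monotonicity is immediate from the definitions. If $I_1\subseteq I_2$ in $\mathcal I(B)$, then $E^{>}_{I_1}\subseteq E^{>}_{I_2}$ directly from the definition of these submodules. Hence $\clspan{{}_A\langle E^{>}_{I_1},E^{<}\rangle}\subseteq\clspan{{}_A\langle E^{>}_{I_2},E^{<}\rangle}$, that is, $\Ind_B^A(I_1)\subseteq\Ind_B^A(I_2)$.

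For the composite, fix $I\triangleleft B$ and set $J=\Ind_B^A(I)$. By the symmetric version of Proposition~\ref{prop:induction-formulas} (with $A$ and $B$, $E^{>}$ and $E^{<}$ interchanged), one of the formulas for $\Ind_A^B(J)$ is
\[
 \Ind_A^B(J)=\clspan{\langle \overline{E^{<}J},E^{>}\rangle_B}.
\]
I expect this to be the main point requiring care: checking that the symmetric counterpart of \eqref{eq:ind-formulas} indeed takes this form, namely the $B$-valued pairing with the $A$-side submodule built from $E^{<}J$, using continuity of the pairing to pass between $E^{<}J$ and its closure. Lemma~\ref{lem:induced-module} gives $\overline{E^{<}J}={}_IE^{<}$. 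Therefore
\[
 \Ind_A^B(J)=\clspan{\langle{}_IE^{<},E^{>}\rangle_B},
\]
and this equals $I$ by Proposition~\ref{prop:recover-I}.

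If the bookkeeping of which submodule appears in the symmetric formula turns out differently, I will fall back on the other form, $\clspan{\langle E^{<},\overline{JE^{>}}\rangle_B}$. By Lemma~\ref{lem:induced-module} this equals $\clspan{\langle E^{<},E_I^{>}\rangle_B}$, which is again $I$ by the first equality of Proposition~\ref{prop:recover-I}. Either form closes the argument.

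The opposite composite $\Ind_B^A\circ\Ind_A^B=\mathrm{id}$ follows by the same argument with roles swapped. All the earlier results hold symmetrically, since the hypotheses (bounded approximate identities, fullness, nondegeneracy) are symmetric in $A$ and $B$.
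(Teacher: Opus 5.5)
Your proof is correct and is essentially the paper's argument: the paper uses exactly your fallback route, $\Ind_A^B(J)=\clspan{\langle E^{<},JE^{>}\rangle_B}=\clspan{\langle E^{<},E_I^{>}\rangle_B}=I$, via Lemma~\ref{lem:induced-module} and the first equality of Proposition~\ref{prop:recover-I}, together with symmetry and monotonicity. Your primary route, through $\overline{E^{<}J}={}_IE^{<}$ and the second equality of Proposition~\ref{prop:recover-I}, is equally valid. It is in fact the literal definition of $\Ind_A^B(J)$ after the interchange, combined with the symmetric form of Proposition~\ref{prop:ideal-submodules}, so the bookkeeping concern you raise does not arise.
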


\begin{proof}
The map is order preserving by definition.  Let $I\triangleleft B$ and set $J=\Ind_B^A(I)$.  Applying the induction formula on the $A$-side and Lemma~\ref{lem:induced-module},
\[
 \Ind_A^B(J)
 =\clspan{\langle E^{<},J E^{>}\rangle_B}
 =\clspan{\langle E^{<},E_I^{>}\rangle_B}
 =I
\]
by Proposition~\ref{prop:recover-I}.  The symmetric argument gives
\[
 \Ind_B^A(\Ind_A^B(J))=J
\]
for every closed ideal $J\triangleleft A$.  Thus the two induction maps are mutually inverse order isomorphisms.  Since the closed ideals of a Banach algebra form a complete lattice, every order isomorphism preserves all existing infima and suprema.
\end{proof}

The same argument also gives the submodule form of the correspondence.

\begin{cor}\label{cor:submodule-correspondence}
For a closed $A$-$B$ submodule $F\subseteq E^{>}$, set
\[
 I_F:=\clspan{\langle E^{<},F\rangle_B}.
\]
Then $I_F\triangleleft B$ and
\[
 E_{I_F}^{>}=F.
\]
Consequently, $I\mapsto E_I^{>}$ is an order isomorphism from $\mathcal I(B)$ onto the lattice of closed $A$-$B$ submodules of $E^{>}$.  There is an analogous statement using $E^{<}$.
\end{cor}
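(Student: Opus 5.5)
We will prove the corollary in three steps: show that $I_F$ is a closed ideal, prove $E_{I_F}^{>}=F$ by two inclusions, and then deduce the order isomorphism from Proposition~\ref{prop:recover-I}.

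\emph{$I_F$ is a closed ideal.} Because $F$ is a right $B$-submodule, $\langle\eta,\xi\rangle_B b=\langle\eta,\xi b\rangle_B$ with $\xi b\in F$, so $I_F$ is closed under right multiplication. For left multiplication we use $b\langle\eta,\xi\rangle_B=\langle b\eta,\xi\rangle_B$. Closedness holds by definition.

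\emph{The inclusion $E_{I_F}^{>}\subseteq F$.} Take $\xi\in E_{I_F}^{>}$. By Lemma~\ref{lem:bai-module}, $u_\lambda\xi\to\xi$ for a bounded approximate identity $(u_\lambda)$ of $A$. By fullness of the $A$-valued pairing, each $u_\lambda$ is approximated by a finite sum $a=\sum_k{}_A\langle\xi_k,\eta_k\rangle$, and then
\[
 a\xi=\sum_k\xi_k\langle\eta_k,\xi\rangle_B .
\]
This only places $a\xi$ in $E^{>}I_F$, not in $F$. So we argue differently, via Proposition~\ref{prop:ideal-submodules}: it gives $E_{I_F}^{>}=\overline{E^{>}I_F}$, so it suffices to show $E^{>}I_F\subseteq F$. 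For $\zeta\in E^{>}$, $\eta\in E^{<}$ and $\xi\in F$,
\[
 \zeta\langle\eta,\xi\rangle_B={}_A\langle\zeta,\eta\rangle\xi\in AF\subseteq F .
\]
By linearity, continuity and closedness of $F$, we get $E^{>}I_F\subseteq F$, and hence $E_{I_F}^{>}\subseteq F$.

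\emph{The inclusion $F\subseteq E_{I_F}^{>}$.} This is immediate: for $\xi\in F$, every pairing $\langle\eta,\xi\rangle_B$ lies in $I_F$ by the definition of $I_F$.

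\emph{The order isomorphism.} The map $I\mapsto E_I^{>}$ is clearly order preserving. By Proposition~\ref{prop:recover-I}, $I_{E_I^{>}}=I$, and by the above, $E_{I_F}^{>}=F$. Hence the two maps are mutually inverse. The inverse $F\mapsto I_F$ is also monotone, so $I\mapsto E_I^{>}$ is an order isomorphism. The statement for $E^{<}$ follows symmetrically, using ${}_I E^{<}$, $\clspan{\langle F,E^{>}\rangle_B}$, and the identity $\langle\xi,\zeta\rangle_B\eta=\xi\,{}_A\langle\zeta,\eta\rangle$.

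The main obstacle is the inclusion $E_{I_F}^{>}\subseteq F$. The direct approximate-identity argument does not work, and the fix is to route through Proposition~\ref{prop:ideal-submodules} together with the Morita compatibility relation $\zeta\langle\eta,\xi\rangle_B={}_A\langle\zeta,\eta\rangle\xi$, which uses that $F$ is a left $A$-submodule.
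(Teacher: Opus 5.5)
Your proposal is correct and follows essentially the same route as the paper: you reduce $E_{I_F}^{>}\subseteq F$ to $E^{>}I_F\subseteq F$ via Proposition~\ref{prop:ideal-submodules} and the compatibility identity $\zeta\langle\eta,\xi\rangle_B={}_A\langle\zeta,\eta\rangle\xi$ (using that $F$ is a left $A$-submodule), and you invoke Proposition~\ref{prop:recover-I} for $I_{E_I^{>}}=I$. Your abandoned approximate-identity attempt was not actually a dead end: it is exactly the proof of Proposition~\ref{prop:ideal-submodules}, and it also finishes the argument once $E^{>}I_F\subseteq F$ is known.
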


\begin{proof}
The ideal property of $I_F$ follows from the module covariance and Morita compatibility identities.  Since $F\subseteq E_{I_F}^{>}$ is immediate from the definition, it remains to show the reverse inclusion.  By Proposition~\ref{prop:ideal-submodules},
\[
 E_{I_F}^{>}=\overline{E^{>}I_F}.
\]
For $\xi\in E^{>}$, $\eta\in E^{<}$ and $f\in F$,
\[
 \xi\langle\eta,f\rangle_B
 ={}_A\langle\xi,\eta\rangle f\in F,
\]
because $F$ is a left $A$-submodule.  Hence $E^{>}I_F\subseteq F$, and the conclusion follows because $F$ is closed.  Finally, Proposition~\ref{prop:recover-I} gives $I_{E_I^{>}}=I$.
\end{proof}

\subsection{Corresponding ideals}

We next ask when the ideals corresponding under Theorem~\ref{thm:ideal-lattice} are themselves Morita equivalent.  There is a minor point here: under the convention of Definition~\ref{def:morita}, a Banach algebra occurring in a Morita equivalence must be square-dense.  The restricted Morita context makes this obstruction completely explicit.

For a closed ideal $K$ in a Banach algebra, write
\[
 K^2:=\overline{KK}
 =\overline{\spann\{k_1k_2:k_1,k_2\in K\}}.
\]
Thus $K$ is nondegenerate as a Banach algebra precisely when
$K^2=K$.

\begin{lem}\label{lem:restricted-pairing-squares}
Let $I\triangleleft B$ and set $J=\Ind_B^A(I)$.  Put
\[
 X:=E_I^{>},\qquad Y:={}_I E^{<}.
\]
Then the original pairings restrict to $I$- and $J$-valued pairings on $Y\times X$ and $X\times Y$, respectively, and
\[
 \clspan{\langle Y,X\rangle_B}=I^2,
 \qquad
 \clspan{{}_A\langle X,Y\rangle}=J^2.
\]
\end{lem}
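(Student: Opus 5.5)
The plan is to reduce both identities to the formulas already available: $X=\overline{E^{>}I}$ and $Y=\overline{IE^{<}}$ from Proposition~\ref{prop:ideal-submodules}, together with $X=\overline{JE^{>}}$ and $Y=\overline{E^{<}J}$ from Lemma~\ref{lem:induced-module}. The restriction claims are immediate. By the definition of $X$, we have $\langle Y,X\rangle_B\subseteq\langle E^{<},X\rangle_B\subseteq I$. For the $A$-valued pairing, ${}_A\langle X,Y\rangle\subseteq{}_A\langle E_I^{>},E^{<}\rangle\subseteq J$ by Definition~\ref{def:induction}.

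For the $B$-side identity, I first show $\clspan{\langle Y,X\rangle_B}\subseteq I^2$. By continuity of the pairing and density of $IE^{<}$ in $Y$ and of $E^{>}I$ in $X$, it suffices to treat $\langle b\eta,\xi c\rangle_B$ with $b,c\in I$. Module covariance gives
\[
\langle b\eta,\xi c\rangle_B=b\langle\eta,\xi\rangle_B c,
\]
and this lies in $I\cdot I\subseteq I^2$ because $\langle\eta,\xi\rangle_B c\in I$. For the reverse inclusion, take $b,c\in I$. Using the bounded approximate identity $(v_\lambda)$ of $B$, we have $bv_\lambda c\to bc$. Approximate $v_\lambda$ by $\sum_k\langle\eta_k,\xi_k\rangle_B$, as in the proof of Proposition~\ref{prop:recover-I}. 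Then
\[
b\Bigl(\sum_k\langle\eta_k,\xi_k\rangle_B\Bigr)c=\sum_k\langle b\eta_k,\xi_kc\rangle_B\in\langle Y,X\rangle_B .
\]
Boundedness of $(v_\lambda)$ together with the approximation shows $bc\in\clspan{\langle Y,X\rangle_B}$. Taking spans and closures gives $I^2$.

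The $A$-side identity follows the same pattern, with the roles swapped and using the descriptions via $J$. For $j,j'\in J$, covariance gives
\[
{}_A\langle j\xi,\eta j'\rangle=j\,{}_A\langle\xi,\eta\rangle\,j'\in J^2 .
\]
By density of $JE^{>}$ in $X$ and of $E^{<}J$ in $Y$, this yields $\clspan{{}_A\langle X,Y\rangle}\subseteq J^2$. Conversely, for $j,j'\in J$, insert the bounded approximate identity $(u_\lambda)$ of $A$ between them and approximate $u_\lambda$ by $\sum_k{}_A\langle\xi_k,\eta_k\rangle$ using fullness. Then
\[
j\,{}_A\langle\xi_k,\eta_k\rangle\,j'={}_A\langle j\xi_k,\eta_kj'\rangle\in{}_A\langle X,Y\rangle,
\]
since $JE^{>}\subseteq X$ and $E^{<}J\subseteq Y$ by Lemma~\ref{lem:induced-module}.

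I do not expect a serious obstacle. The one point needing care is the approximation step: each product $bv_\lambda c$ is replaced by a finite sum of pairings, with an error controlled by $\|b\|\,\|c\|$ times the approximation error. This works because the bounded approximate identity is bounded and multiplication is continuous, exactly as in Proposition~\ref{prop:recover-I}. The other point is to use the $J$-descriptions of $X$ and $Y$ for the $A$-side rather than the $I$-descriptions, which keeps the $A$-side argument symmetric with the $B$-side one.
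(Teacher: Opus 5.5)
Your proof is correct and essentially matches the paper's: the inclusions into $I^2$ and $J^2$ are obtained exactly as in the paper, by covariance on the dense subspaces $E^{>}I$, $IE^{<}$ and $JE^{>}$, $E^{<}J$. For the reverse inclusions you place an approximate identity between the two factors and use fullness, whereas the paper approximates the first factor via $I=\clspan{\langle Y,E^{>}\rangle_B}$ (Proposition~\ref{prop:recover-I}) and $J=\clspan{{}_A\langle X,E^{<}\rangle}$ (Definition~\ref{def:induction}) and then multiplies on the right; this is only a minor repackaging of the same approximate-identity-plus-fullness idea.
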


\begin{proof}
The pairings take values in $I$ and $J$ by the definitions of $X$, $Y$ and $J$.  By Proposition~\ref{prop:ideal-submodules},
\[
 X=\overline{E^{>}I},\qquad Y=\overline{I E^{<}}.
\]
Hence, for $i,i'\in I$, $\eta\in E^{<}$ and $\xi\in E^{>}$,
\[
 \langle i\eta,\xi i'\rangle_B
 =i\langle\eta,\xi\rangle_Bi'\in I^2.
\]
Continuity of the pairing therefore gives
\[
 \clspan{\langle Y,X\rangle_B}\subseteq I^2.
\]
For the reverse inclusion, Proposition~\ref{prop:recover-I} gives
\[
 I=\clspan{\langle Y,E^{>}\rangle_B}.
\]
If $i_1,i_2\in I$, approximate $i_1$ by finite sums of terms
$\langle y_k,\xi_k\rangle_B$ with $y_k\in Y$ and $\xi_k\in E^{>}$.  Multiplication on the right by $i_2$ gives
\[
 i_1i_2\approx
 \sum_k\langle y_k,\xi_k i_2\rangle_B,
\]
and $\xi_k i_2\in E^{>}I\subseteq X$.  Thus $I^2\subseteq
\clspan{\langle Y,X\rangle_B}$.

For the $A$-valued pairing, Lemma~\ref{lem:induced-module} gives
\[
 X=\overline{J E^{>}},\qquad Y=\overline{E^{<}J}.
\]
Thus, for $j,j'\in J$,
\[
 {}_A\langle j\xi,\eta j'\rangle
 =j\,{}_A\langle\xi,\eta\rangle j'\in J^2,
\]
so
\[
 \clspan{{}_A\langle X,Y\rangle}\subseteq J^2.
\]
On the other hand, Definition~\ref{def:induction} gives
\[
 J=\clspan{{}_A\langle X,E^{<}\rangle}.
\]
If $j_1,j_2\in J$, approximate $j_1$ by finite sums of terms
${}_A\langle x_k,\eta_k\rangle$ with $x_k\in X$ and $\eta_k\in E^{<}$.  Then
\[
 j_1j_2\approx
 \sum_k{}_A\langle x_k,\eta_k j_2\rangle,
\]
and $\eta_kj_2\in E^{<}J\subseteq Y$.  This proves the second equality.
\end{proof}

\begin{thm}\label{thm:corresponding-ideals-morita}
Let $I\triangleleft B$ and set $J=\Ind_B^A(I)$.  With
\[
 X=E_I^{>},\qquad Y={}_I E^{<},
\]
the following conditions are equivalent:
\begin{enumerate}
\item $I^2=I$;
\item $J^2=J$;
\item with the restricted actions and pairings, ${}_IY_J$ and ${}_JX_I$ form a Morita equivalence between $J$ and $I$ in the sense of Definition~\ref{def:morita}.
\end{enumerate}
In particular, if either $I$ or $J$ has a bounded approximate identity, then $I$ and $J$ are Morita equivalent via the restricted bimodules.
\end{thm}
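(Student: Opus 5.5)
Here is the plan. I would first establish (iii)$\Rightarrow$(i),(ii), then (i)$\Leftrightarrow$(ii), and finally (i)$\wedge$(ii)$\Rightarrow$(iii).

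\textbf{From (iii) to (i) and (ii).} Definition~\ref{def:morita} requires both pairings to be full. Hence, if ${}_IY_J$ and ${}_JX_I$ form a Morita equivalence, then
\[
\clspan{\langle Y,X\rangle_B}=I,\qquad \clspan{{}_A\langle X,Y\rangle}=J.
\]
Lemma~\ref{lem:restricted-pairing-squares} identifies these closed spans as $I^2$ and $J^2$, so $I^2=I$ and $J^2=J$.

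\textbf{Equivalence of (i) and (ii).} Assume $I^2=I$. Use the description $X=\overline{E^{>}I}$ and $J=\clspan{{}_A\langle X,E^{<}\rangle}$. Since $I=I^2$, we get $\overline{E^{>}I}=\overline{E^{>}II}\subseteq\overline{X I}$, so $X=\overline{XI}$. Similarly $Y=\overline{IY}$. Then, for $x\in X$, $i\in I$ and $\eta\in E^{<}$,
\[
{}_A\langle xi,\eta\rangle={}_A\langle x,i\eta\rangle\in{}_A\langle X,Y\rangle .
\]
Hence $J\subseteq\clspan{{}_A\langle X,Y\rangle}=J^2$, and so $J=J^2$. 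The converse is symmetric: use $X=\overline{JE^{>}}$ and $Y=\overline{E^{<}J}$ from Lemma~\ref{lem:induced-module}, together with $I=\clspan{\langle Y,E^{>}\rangle_B}$ from Proposition~\ref{prop:recover-I}.

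\textbf{From (i) and (ii) to (iii).} First, the restricted structure is well defined.
\begin{itemize}
\item $X$ is closed under left multiplication by $J\subseteq A$ and right multiplication by $I\subseteq B$, since $X$ is an $A$--$B$ submodule. The same holds for $Y$.
\item The restricted pairings take values in $I$ and $J$ by Lemma~\ref{lem:restricted-pairing-squares}.
\item Covariance, balancing and the two compatibility identities are inherited from the ambient context, since they are identities between elements.
\item The norm bounds and the contractivity of the actions are inherited.
\end{itemize}
Fullness is exactly $I^2=I$ and $J^2=J$, again by Lemma~\ref{lem:restricted-pairing-squares}.

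The remaining issue, and the one real obstacle, is nondegeneracy of the four restricted actions. We need
\[
\overline{JX}=X,\qquad \overline{XI}=X,\qquad \overline{IY}=Y,\qquad \overline{YJ}=Y.
\]
These follow from the earlier formulas combined with square-density:
\[
X=\overline{E^{>}I}=\overline{E^{>}I^2}\subseteq\overline{XI},\qquad X=\overline{JE^{>}}=\overline{J^2E^{>}}\subseteq\overline{JX}.
\]
The last inclusion uses $JE^{>}\subseteq X$ from Lemma~\ref{lem:induced-module}. The identities for $Y$ are symmetric. The one point to watch is that replacing $I$ by $I^2$ inside a closure is legitimate: $I$ lies in the closure of $II$ and the module action is continuous.

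\textbf{The final assertion.} A closed ideal $K$ with a bounded approximate identity $(u_\lambda)$ satisfies $K^2=K$, since $k=\lim u_\lambda k$. So if $I$ or $J$ has a bounded approximate identity, condition (i) or (ii) holds, and hence (iii) holds by the equivalences above.
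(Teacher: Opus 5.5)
Your proof is correct and follows the paper's argument essentially step for step. Fullness of the restricted pairings is read off from Lemma~\ref{lem:restricted-pairing-squares}; (i)$\Rightarrow$(ii) uses the $B$-balance identity to rewrite ${}_A\langle X,E^{<}\rangle$ as pairings of $X$ with $IE^{<}\subseteq Y$; and nondegeneracy comes from $X=\overline{E^{>}I^2}$ and $X=\overline{J^2E^{>}}$, exactly as in the paper. The only cosmetic difference is that you prove (ii)$\Rightarrow$(i) directly from Lemma~\ref{lem:induced-module} and Proposition~\ref{prop:recover-I}, which is what the paper's appeal to the inverse Morita equivalence and Theorem~\ref{thm:ideal-lattice} amounts to.
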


\begin{proof}
Assume first that $I^2=I$.  By Proposition~\ref{prop:induction-formulas},
\[
 J=\clspan{{}_A\langle E^{>}I,E^{<}\rangle}.
\]
Since $I=\overline{II}$, the subspace $II$ is dense in $I$.
Hence the right-hand side is also the closed span of terms
${}_A\langle\xi i_1i_2,\eta\rangle$.  By the $B$-balance identity,
\[
 {}_A\langle\xi i_1i_2,\eta\rangle
 ={}_A\langle\xi i_1,i_2\eta\rangle,
\]
where $\xi i_1\in X$ and $i_2\eta\in Y$.  Hence Lemma~\ref{lem:restricted-pairing-squares} gives
\[
 J\subseteq\clspan{{}_A\langle X,Y\rangle}=J^2.
\]
The reverse inclusion is automatic, so $J^2=J$.  Applying the same argument to the inverse Morita equivalence and using Theorem~\ref{thm:ideal-lattice} proves the converse.  Thus (i) and (ii) are equivalent.

Assume these conditions.  Lemma~\ref{lem:restricted-pairing-squares} shows that the restricted pairings are full.  The restricted module actions are nondegenerate.  Indeed,
\begin{align*}
 \overline{XI}\supseteq\overline{(E^{>}I)I}
 &=\overline{E^{>}I^2}=X,
 \\
 \overline{IY}\supseteq\overline{I(IE^{<})}
 &=\overline{I^2E^{<}}=Y,
\end{align*}
and the reverse inclusions are clear.  Similarly, using Lemma~\ref{lem:induced-module} and $J^2=J$,
\[
 \overline{JX}=X,
 \qquad
 \overline{YJ}=Y.
\]
All module covariance, balancing and Morita compatibility identities are inherited from the original context.  Thus (iii) holds.

Conversely, if the restricted context in (iii) is a Morita equivalence, its pairings are full.  Lemma~\ref{lem:restricted-pairing-squares} then gives
$I=I^2$ and $J=J^2$.  This proves the equivalence.

Finally, a Banach algebra with a bounded approximate identity is square-dense.  Hence a bounded approximate identity for either $I$ or $J$ implies the equivalent conditions above.
\end{proof}

\begin{rem}\label{rem:degenerate-ideal-morita}
Paravicini later extended Morita equivalence to possibly degenerate Banach algebras by replacing fullness and nondegeneracy with a power condition; see \cite[Definition~1.4]{Paravicini2015}.  In his notation, one asks for integers $k,l\geq1$ such that the closed spans $J^k$ and $I^l$ are contained in the respective pairing ranges.  Lemma~\ref{lem:restricted-pairing-squares} shows that, for the restricted context above, the closed linear spans of the pairing ranges are exactly $J^2$ and $I^2$.  Thus Paravicini's condition is satisfied with $k=l=2$ for every pair of corresponding closed ideals, without assuming that either ideal is square-dense or has a bounded approximate identity.  Consequently, every pair of corresponding closed ideals is Morita equivalent in the sense of this later definition.  Throughout the remainder of the paper, however, we continue to use the nondegenerate convention of Definition~\ref{def:morita}.
\end{rem}

\subsection{Quotient algebras}

The ideal correspondence is compatible with passage to quotients.

\begin{thm}\label{thm:quotient-morita}
Let $I\triangleleft B$ and $J=\Ind_B^A(I)$.  Then $A/J$ and $B/I$ are Morita equivalent.  More precisely, the quotient bimodules
\[
 \overline E^{>}:=E^{>}/E_I^{>},
 \qquad
 \overline E^{<}:=E^{<}/{}_I E^{<}
\]
form a Morita equivalence between $A/J$ and $B/I$, with pairings induced from the original pairings.
\end{thm}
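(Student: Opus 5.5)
We verify each axiom of Definition~\ref{def:morita} for the quotient data, in order. First, the module structures. $E_I^{>}$ is a closed $A$-$B$ submodule, and by Lemma~\ref{lem:induced-module} we have $JE^{>}\subseteq E_I^{>}$. Since $E^{>}I\subseteq E_I^{>}$ as well, the quotient $\overline E^{>}=E^{>}/E_I^{>}$ is a Banach $A/J$-$B/I$ bimodule with contractive actions. Symmetrically, Lemma~\ref{lem:induced-module} gives $E^{<}J\subseteq{}_IE^{<}$ and Proposition~\ref{prop:ideal-submodules} gives $IE^{<}\subseteq{}_IE^{<}$, so $\overline E^{<}$ is a $B/I$-$A/J$ bimodule. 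Contractivity follows by taking infima over representatives, for example $\|(a+J)(\xi+E_I^{>})\|\le\|a+j'\|\,\|\xi+\xi'\|$.

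Next, the pairings, which is where well-definedness has to be checked. Define
\[
\langle\eta+{}_IE^{<},\xi+E_I^{>}\rangle_{B/I}:=\langle\eta,\xi\rangle_B+I,
\qquad
{}_{A/J}\langle\xi+E_I^{>},\eta+{}_IE^{<}\rangle:={}_A\langle\xi,\eta\rangle+J.
\]
For the $B/I$-pairing, $\langle E^{<},E_I^{>}\rangle_B\subseteq I$ and $\langle{}_IE^{<},E^{>}\rangle_B\subseteq I$ hold directly from the definitions. For the $A/J$-pairing, ${}_A\langle E_I^{>},E^{<}\rangle\subseteq J$ is Definition~\ref{def:induction}, and ${}_A\langle E^{>},{}_IE^{<}\rangle\subseteq J$ is the last formula in \eqref{eq:ind-formulas}. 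Bilinearity and the norm bound at most one follow by choosing representatives of nearly minimal norm.

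Finally, the algebraic identities and the density conditions. The covariance, balancing \eqref{eq:A-balance}, \eqref{eq:B-balance}, and the two Morita compatibility identities all descend immediately from the originals, because the quotient maps are compatible with every action and pairing. Fullness holds because the quotient maps $B\to B/I$ and $A\to A/J$ are continuous and surjective, so they carry dense spans to dense spans. Nondegeneracy holds for the same reason: the image of the dense subspace $AE^{>}$ in $\overline E^{>}$ is exactly $(A/J)\overline E^{>}$, and likewise for the other three actions. Nothing here requires a bounded approximate identity on the quotients, although one is available anyway as the image of one for $A$ or $B$.

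The only genuinely nontrivial point is that the $A/J$-valued pairing is well defined in its second variable, i.e. ${}_A\langle E^{>},{}_IE^{<}\rangle\subseteq J$. This is exactly the content of Propositions~\ref{prop:ideal-submodules} and \ref{prop:induction-formulas}, which already rest on the bounded approximate identities. Once that inclusion is in hand, the rest is routine.
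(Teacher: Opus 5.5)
Your proposal is correct and follows essentially the same route as the paper. The module actions descend via Lemma~\ref{lem:induced-module} and Proposition~\ref{prop:ideal-submodules}; the pairings are well defined by Definition~\ref{def:induction} and the last formula in \eqref{eq:ind-formulas}; and contractivity, the Morita identities, fullness and nondegeneracy pass to the quotients via representatives and images of dense subspaces.
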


\begin{proof}
We first check that the module actions descend.  By Lemma~\ref{lem:induced-module},
\[
 J E^{>}\subseteq E_I^{>},
 \qquad
 E^{<}J\subseteq{}_I E^{<},
\]
and by Proposition~\ref{prop:ideal-submodules},
\[
 E^{>}I\subseteq E_I^{>},
 \qquad
 I E^{<}\subseteq{}_I E^{<}.
\]
Thus $\overline E^{>}$ is an $(A/J)$-$(B/I)$ bimodule and $\overline E^{<}$ is a $(B/I)$-$(A/J)$ bimodule.

Define
\begin{align*}
 \langle[\eta],[\xi]\rangle_{B/I}
 &:=\langle\eta,\xi\rangle_B+I,\\
 {}_{A/J}\langle[\xi],[\eta]\rangle
 &:={}_A\langle\xi,\eta\rangle+J.
\end{align*}
If $\xi$ is changed by an element of $E_I^{>}$, the first pairing changes by an element of $I$ by definition, while the second changes by an element of $J$ by Definition~\ref{def:induction}.  The same is true if $\eta$ is changed by an element of ${}_I E^{<}$, using Proposition~\ref{prop:induction-formulas}.  Hence the pairings are well-defined.  They remain contractive for the quotient norms: for example, given $[\eta]$ and $[\xi]$, choose representatives $\eta_0$ and $\xi_0$ with norms arbitrarily close to the corresponding quotient norms and use
\[
 \|\langle[\eta],[\xi]\rangle_{B/I}\|
 \leq \|\langle\eta_0,\xi_0\rangle_B\|
 \leq \|\eta_0\|\,\|\xi_0\|.
\]
The $A/J$-valued pairing is handled in the same way.

The Morita compatibility identities pass to the quotients.  Fullness follows because the images of the original full pairing spans are dense in $A/J$ and $B/I$.  Finally, nondegeneracy of the quotient module actions follows by taking images of the dense subspaces $AE^{>}$, $E^{>}B$, $BE^{<}$ and $E^{<}A$.  Thus the quotient pair is a Morita equivalence.
\end{proof}

\section{Dense cores and geometric ideals}

The preceding ideal correspondence is purely Banach-algebraic.  We now record an abstract condition under which it preserves density with respect to distinguished algebraic cores.
This condition is inspired by the Morita equivalence established in \cite{Chung2025}.

\begin{defn}\label{def:dense-subcontext}
Let $(E^{<},E^{>})$ be a Morita equivalence between Banach algebras $A$ and $B$.  A \emph{compatible dense subcontext} consists of dense subalgebras
\[
 A_0\subseteq A,
 \qquad
 B_0\subseteq B,
\]
and dense linear subspaces
\[
 E_0^{<}\subseteq E^{<},
 \qquad
 E_0^{>}\subseteq E^{>},
\]
satisfying the four module-stability conditions
\begin{align*}
 B_0E_0^{<}&\subseteq E_0^{<},&
 E_0^{<}A_0&\subseteq E_0^{<},\\
 A_0E_0^{>}&\subseteq E_0^{>},&
 E_0^{>}B_0&\subseteq E_0^{>},
\end{align*}
and the pairing conditions
\[
 \langle E_0^{<},E_0^{>}\rangle_B\subseteq B_0,
 \qquad
 {}_A\langle E_0^{>},E_0^{<}\rangle\subseteq A_0.
\]
\end{defn}

The four separate module conditions in Definition~\ref{def:dense-subcontext} are important when the dense algebras are nonunital; a triple-product condition alone would not be sufficient for the argument below.

\begin{defn}\label{def:core-geometric}
Let $A_0\subseteq A$ be a dense subalgebra.  A closed ideal $J\triangleleft A$ is called \emph{$A_0$-geometric} if
\[
 \overline{J\cap A_0}=J.
\]
We write
\[
 \mathcal G(A;A_0)
 :=\{J\triangleleft A:J\text{ is }A_0\text{-geometric}\}.
\]
\end{defn}

\begin{thm}[Preservation of geometric ideals]\label{thm:geometric-preservation}
Let $A$ and $B$ have bounded approximate identities, let $(E^{<},E^{>})$ be a Morita equivalence between them, and suppose $(A_0,B_0,E_0^{<},E_0^{>})$ is a compatible dense subcontext.  Then, for every closed ideal $I\triangleleft B$,
\[
 I\in\mathcal G(B;B_0)
 \quad\Longleftrightarrow\quad
 \Ind_B^A(I)\in\mathcal G(A;A_0).
\]
Consequently, Morita induction restricts to an order isomorphism
\[
 \Ind_B^A:\mathcal G(B;B_0)
 \xrightarrow{\cong}
 \mathcal G(A;A_0).
\]
\end{thm}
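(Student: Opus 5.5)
The plan is to prove the forward implication directly from the induction formulas of Proposition~\ref{prop:induction-formulas}. I will then obtain the converse by running the same argument for the inverse map $\Ind_A^B$, using Theorem~\ref{thm:ideal-lattice}. The point of the four separate module-stability conditions in Definition~\ref{def:dense-subcontext} is exactly that the argument needs one pair of them in each direction.

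\emph{Forward direction.} Suppose $I\in\mathcal G(B;B_0)$ and set $J=\Ind_B^A(I)$. By \eqref{eq:ind-formulas},
\[
 J=\clspan{{}_A\langle E^{>}I,E^{<}\rangle}.
\]
First I replace each factor by a dense core. The bilinear pairing and the right $B$-action on $E^{>}$ are jointly continuous. Also $E_0^{>}$ is dense in $E^{>}$, $I\cap B_0$ is dense in $I$, and $E_0^{<}$ is dense in $E^{<}$. Hence every term ${}_A\langle \xi b,\eta\rangle$ with $\xi\in E^{>}$, $b\in I$, $\eta\in E^{<}$ is a norm limit of terms ${}_A\langle \xi_0 b_0,\eta_0\rangle$ with $\xi_0\in E_0^{>}$, $b_0\in I\cap B_0$, $\eta_0\in E_0^{<}$. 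Therefore
\[
 J=\clspan{{}_A\langle E_0^{>}(I\cap B_0),E_0^{<}\rangle}.
\]
Next I check that each such generator lies in $J\cap A_0$. Since $E_0^{>}B_0\subseteq E_0^{>}$, we have $\xi_0b_0\in E_0^{>}$, so ${}_A\langle\xi_0b_0,\eta_0\rangle\in{}_A\langle E_0^{>},E_0^{<}\rangle\subseteq A_0$. Since $\xi_0 b_0\in E^{>}I$, the same element lies in $J$. The linear span of these generators is therefore contained in the subspace $J\cap A_0$, and its closure is $J$. This gives $\overline{J\cap A_0}=J$.

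\emph{Converse.} Suppose $J=\Ind_B^A(I)\in\mathcal G(A;A_0)$. By Theorem~\ref{thm:ideal-lattice}, $I=\Ind_A^B(J)$. Applying Proposition~\ref{prop:induction-formulas} to the inverse Morita equivalence gives
\[
 I=\clspan{\langle E^{<}J,E^{>}\rangle_B}.
\]
The same density argument then rewrites this as the closed span of $\langle \eta_0 a_0,\xi_0\rangle_B$ with $\eta_0\in E_0^{<}$, $a_0\in J\cap A_0$, $\xi_0\in E_0^{>}$. Now $E_0^{<}A_0\subseteq E_0^{<}$ gives $\eta_0a_0\in E_0^{<}$, so each generator lies in $\langle E_0^{<},E_0^{>}\rangle_B\subseteq B_0$, and also in $I$. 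Hence $I\in\mathcal G(B;B_0)$.

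\emph{Order isomorphism.} The final assertion follows because $\Ind_B^A$ is an order isomorphism $\mathcal I(B)\to\mathcal I(A)$ by Theorem~\ref{thm:ideal-lattice}. The equivalence just proved shows that it maps $\mathcal G(B;B_0)$ onto $\mathcal G(A;A_0)$, with inverse the restriction of $\Ind_A^B$.

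The only step needing care is the density replacement. I expect it to be routine: approximate the three factors in succession and use boundedness of the pairing and of the module action.
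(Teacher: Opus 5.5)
Your proposal is correct and follows the paper's proof. The forward direction is the same density replacement in ${}_A\langle E^{>}I,E^{<}\rangle$, using $E_0^{>}B_0\subseteq E_0^{>}$ and ${}_A\langle E_0^{>},E_0^{<}\rangle\subseteq A_0$. Your explicit converse, via $I=\clspan{\langle E^{<}J,E^{>}\rangle_B}$ and $E_0^{<}A_0\subseteq E_0^{<}$, is exactly the symmetric argument the paper invokes when it passes to the inverse Morita equivalence together with Theorem~\ref{thm:ideal-lattice}.
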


\begin{proof}
Let $I\in\mathcal G(B;B_0)$ and set $J=\Ind_B^A(I)$.  By Proposition~\ref{prop:induction-formulas},
\[
 J
 =\overline{\spann\bigl\{{}_A\langle\xi b,\eta\rangle:
 \xi\in E^{>},\ b\in I,\ \eta\in E^{<}\bigr\}}.
\]
Set
\[
 D_I
 :=\spann\bigl\{{}_A\langle\xi b,\eta\rangle:
 \xi\in E_0^{>},\ b\in I\cap B_0,\ \eta\in E_0^{<}\bigr\}.
\]
Since $E_0^{>}B_0\subseteq E_0^{>}$ and the $A$-valued pairing maps $E_0^{>}\times E_0^{<}$ into $A_0$, we have $D_I\subseteq A_0$.  Since $b\in I$, every element of $D_I$ also belongs to $J$.  Hence
\[
 D_I\subseteq J\cap A_0.
\]

We claim that $D_I$ is dense in $J$.  Fix ${}_A\langle\xi b,\eta\rangle$ with $\xi\in E^{>},\ b\in I,\ \eta\in E^{<}$.  Choose sequences
\[
 \xi_n\in E_0^{>},\quad \xi_n\to\xi,
 \qquad
 b_n\in I\cap B_0,\quad b_n\to b,
 \qquad
 \eta_n\in E_0^{<},\quad \eta_n\to\eta.
\]
The map
\[
 E^{>}\times B\times E^{<}\longrightarrow A,
 \qquad
 (\xi,b,\eta)\longmapsto{}_A\langle\xi b,\eta\rangle
\]
is continuous trilinear, so
\[
 {}_A\langle\xi_n b_n,\eta_n\rangle
 \longrightarrow{}_A\langle\xi b,\eta\rangle.
\]
Thus $\overline{D_I}=J$.  Therefore
\[
 J=\overline{D_I}
 \subseteq\overline{J\cap A_0}
 \subseteq J,
\]
so $J$ is $A_0$-geometric.

The converse follows by applying the same argument to the inverse Morita equivalence and using Theorem~\ref{thm:ideal-lattice}.  The final assertion is then immediate.
\end{proof}

At this level of generality we deliberately state an order isomorphism rather than a lattice isomorphism.  For an arbitrary dense subalgebra $A_0\subseteq A$, it is not automatic that $\mathcal G(A;A_0)$ is closed under intersections.  If this is known independently on one side, the lattice statement follows.

\begin{cor}\label{cor:geometric-lattice}
Under the hypotheses of Theorem~\ref{thm:geometric-preservation}, suppose $\mathcal G(B;B_0)$ is a sublattice of $\mathcal I(B)$.  Then $\mathcal G(A;A_0)$ is a sublattice of $\mathcal I(A)$, and the restricted induction map is a lattice isomorphism.
\end{cor}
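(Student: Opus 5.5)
The plan is to transport the lattice structure through the order isomorphism of Theorem~\ref{thm:geometric-preservation}. Since $\Ind_B^A:\mathcal I(B)\to\mathcal I(A)$ is a lattice isomorphism (Theorem~\ref{thm:ideal-lattice}), it preserves binary meets and joins in $\mathcal I$. In $\mathcal I(A)$ these are $J_1\cap J_2$ and $\overline{J_1+J_2}$. I also plan to use that $\Ind_B^A$ maps $\mathcal G(B;B_0)$ onto $\mathcal G(A;A_0)$, with inverse $\Ind_A^B$ mapping $\mathcal G(A;A_0)$ onto $\mathcal G(B;B_0)$. This is exactly the biconditional in Theorem~\ref{thm:geometric-preservation}, combined with Theorem~\ref{thm:ideal-lattice}.

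First, closure under meets. Take $J_1,J_2\in\mathcal G(A;A_0)$ and set $I_k=\Ind_A^B(J_k)$. By the biconditional, $I_k\in\mathcal G(B;B_0)$. Since $\mathcal G(B;B_0)$ is a sublattice of $\mathcal I(B)$, we have $I_1\cap I_2\in\mathcal G(B;B_0)$. Because $\Ind_B^A$ preserves infima in $\mathcal I$, we get $\Ind_B^A(I_1\cap I_2)=J_1\cap J_2$. By Theorem~\ref{thm:geometric-preservation}, this ideal lies in $\mathcal G(A;A_0)$.

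Second, closure under joins. The same argument applies with $\overline{I_1+I_2}$ in place of $I_1\cap I_2$, giving $\overline{J_1+J_2}\in\mathcal G(A;A_0)$. Hence $\mathcal G(A;A_0)$ is a sublattice of $\mathcal I(A)$.

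Finally, the restricted map is a bijection between two sublattices. It is the restriction of a lattice isomorphism $\mathcal I(B)\to\mathcal I(A)$, and meets and joins in each sublattice are computed in the ambient $\mathcal I$. Therefore it preserves $\wedge$ and $\vee$, so it is a lattice isomorphism.

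No step looks like a real obstacle. The only point needing care is to note explicitly that the sublattice operations are the ambient ones, $\cap$ and $\overline{\,\cdot+\cdot\,}$. That is what the hypothesis gives on the $B$-side, and it is what lets the ambient isomorphism carry them over to the $A$-side.
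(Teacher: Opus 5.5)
Your proposal is correct and follows the paper's proof: you pull two $A_0$-geometric ideals back to $B_0$-geometric ones and use the hypothesis to take their meet and join in $\mathcal G(B;B_0)$. Then Theorem~\ref{thm:ideal-lattice} carries these to $J_1\cap J_2$ and $\overline{J_1+J_2}$, and Theorem~\ref{thm:geometric-preservation} shows these are geometric, exactly as the paper argues.
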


\begin{proof}
Every $A_0$-geometric ideal is the induction of a unique $B_0$-geometric ideal.  Since the full induction map is a lattice isomorphism by Theorem~\ref{thm:ideal-lattice}, the meet and join of two induced geometric ideals are the inductions of the meet and join of their preimages.  The latter are geometric by hypothesis, and Theorem~\ref{thm:geometric-preservation} completes the proof.
\end{proof}

\section{Preservation via localized submodules}

The dense core theorem is adapted to ideals defined by approximation from a fixed algebraic subalgebra.  A complementary situation occurs when a distinguished ideal is characterized by the closed submodule that it generates inside a Morita bimodule.  The following elementary criterion isolates this second mechanism and will be useful whenever such localized submodules are easier to identify than the induced ideals themselves.

\begin{prop}[Localized submodule criterion]\label{prop:localized-submodule}
Let $A$ and $B$ be Banach algebras with bounded approximate identities, and let $(E^{<},E^{>})$ be a Morita equivalence between them.  Let $J\triangleleft A$ and $I\triangleleft B$ be closed ideals.  If
\[
 \overline{J E^{>}}=\overline{E^{>}I},
 \tag{4.1}\label{eq:localized-module}
\]
then
\[
 \Ind_A^B(J)=I
 \qquad\text{and}\qquad
 \Ind_B^A(I)=J.
\]
In other words, $J$ and $I$ correspond under the ideal lattice isomorphism of Theorem~\ref{thm:ideal-lattice}.
\end{prop}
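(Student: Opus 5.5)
The plan is to compute both induced ideals directly from the induction formulas, using the common submodule $F:=\overline{JE^{>}}=\overline{E^{>}I}$ as a bridge. First, Proposition~\ref{prop:ideal-submodules} gives $\overline{E^{>}I}=E_I^{>}$, so the hypothesis \eqref{eq:localized-module} reads $\overline{JE^{>}}=E_I^{>}$.

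For the identity $\Ind_A^B(J)=I$, I will apply the induction formula on the $A$-side. This is the analogue of Proposition~\ref{prop:induction-formulas} with the roles of $A,B$ and $E^{>},E^{<}$ interchanged, exactly as it was used in the proof of Theorem~\ref{thm:ideal-lattice}. It gives
\[
 \Ind_A^B(J)=\clspan{\langle E^{<},JE^{>}\rangle_B}=\clspan{\langle E^{<},\overline{JE^{>}}\rangle_B},
\]
where the second equality holds by continuity of the pairing. Substituting $\overline{JE^{>}}=E_I^{>}$ and invoking Proposition~\ref{prop:recover-I} yields $\Ind_A^B(J)=\clspan{\langle E^{<},E_I^{>}\rangle_B}=I$.

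The second identity $\Ind_B^A(I)=J$ then follows formally from Theorem~\ref{thm:ideal-lattice}. Since $\Ind_B^A$ and $\Ind_A^B$ are mutually inverse, we have $\Ind_B^A(I)=\Ind_B^A(\Ind_A^B(J))=J$. Alternatively, one can argue directly: by definition $\Ind_B^A(I)=\clspan{{}_A\langle E_I^{>},E^{<}\rangle}=\clspan{{}_A\langle JE^{>},E^{<}\rangle}$, and the latter equals $J$ by the symmetric version of Proposition~\ref{prop:recover-I}, applied to the ideal $J\triangleleft A$.

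No serious obstacle is expected. The only point requiring care is the bookkeeping of the side-swapped formulas: one must check that the $A$-side analogue of Proposition~\ref{prop:induction-formulas} really expresses $\Ind_A^B(J)$ as $\clspan{\langle E^{<},JE^{>}\rangle_B}$, with $E^{>}$ and the $B$-valued pairing in the correct slots. This is the same identification already made in the proof of Theorem~\ref{thm:ideal-lattice}, so I will simply cite it there.
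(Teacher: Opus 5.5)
Your proof is correct and follows the paper's route. You compute $\Ind_A^B(J)=\clspan{\langle E^{<},JE^{>}\rangle_B}$, substitute \eqref{eq:localized-module}, recover $I$, and then get $\Ind_B^A(I)=J$ from Theorem~\ref{thm:ideal-lattice}; the only cosmetic difference is that you recover $I$ by citing Propositions~\ref{prop:ideal-submodules} and~\ref{prop:recover-I}, whereas the paper computes $\clspan{\langle E^{<},E^{>}I\rangle_B}=\overline{BI}=I$ directly from fullness and the bounded approximate identity of $B$.
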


\begin{proof}
By the symmetric form of Proposition~\ref{prop:induction-formulas},
\[
 \Ind_A^B(J)
 =\clspan{\langle E^{<},J E^{>}\rangle_B}.
\]
Using \eqref{eq:localized-module}, continuity and right $B$-linearity of the $B$-valued pairing, we obtain
\begin{align*}
 \Ind_A^B(J)
 &=\clspan{\langle E^{<},E^{>}I\rangle_B}\\
 &=\overline{\spann\langle E^{<},E^{>}\rangle_B\, I}.
\end{align*}
Fullness gives $\clspan{\langle E^{<},E^{>}\rangle_B}=B$.  Hence
\[
 \Ind_A^B(J)=\overline{BI}=I,
\]
where the last equality follows from the bounded approximate identity of $B$.  The equality $\Ind_B^A(I)=J$ now follows from Theorem~\ref{thm:ideal-lattice}; alternatively, one may repeat the same argument on the inverse Morita equivalence.
\end{proof}

\begin{cor}\label{cor:localized-families}
Let $\{J_\lambda\}_{\lambda\in\Lambda}$ and
$\{I_\lambda\}_{\lambda\in\Lambda}$ be families of closed ideals of
$A$ and $B$, respectively.  Suppose that, for every
$\lambda\in\Lambda$,
\[
 \overline{J_\lambda E^{>}}
 =
 \overline{E^{>}I_\lambda}.
\]
Then
\[
 \Ind_A^B(J_\lambda)=I_\lambda
 \qquad(\lambda\in\Lambda).
\]

Suppose moreover that $\Lambda$ is a partially ordered set and that
the assignments
\[
 \lambda\longmapsto J_\lambda,
 \qquad
 \lambda\longmapsto I_\lambda
\]
are order isomorphisms from $\Lambda$ onto the two respective
families.  Then Morita induction restricts to an order isomorphism
between these families.

If, in addition, $\Lambda$ is a lattice, the two families are
sublattices of the respective ideal lattices, and the above
parametrizations are lattice isomorphisms, then the restricted
induction map is a lattice isomorphism.
\end{cor}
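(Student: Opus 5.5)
The first assertion follows by applying Proposition~\ref{prop:localized-submodule} to each pair $(J_\lambda,I_\lambda)$ separately. The hypothesis $\overline{J_\lambda E^{>}}=\overline{E^{>}I_\lambda}$ is exactly \eqref{eq:localized-module}. Hence $\Ind_A^B(J_\lambda)=I_\lambda$ and $\Ind_B^A(I_\lambda)=J_\lambda$ for every $\lambda\in\Lambda$. No further argument is needed here.

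For the order statement, write $\mathcal J=\{J_\lambda\}$ and $\mathcal I_\Lambda=\{I_\lambda\}$. By the first part, $\Ind_A^B$ maps $\mathcal J$ into $\mathcal I_\Lambda$. On $\mathcal J$ it coincides with the composite
\[
 \mathcal J\xrightarrow{\ (\lambda\mapsto J_\lambda)^{-1}\ }\Lambda\xrightarrow{\ \lambda\mapsto I_\lambda\ }\mathcal I_\Lambda .
\]
This composite is well defined because $\lambda\mapsto J_\lambda$ is a bijection onto $\mathcal J$. Each factor is an order isomorphism, so the composite is an order isomorphism of $\mathcal J$ onto $\mathcal I_\Lambda$. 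Its inverse is the restriction of $\Ind_B^A$, again by the first part.

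Alternatively, one can get this without the parametrization. By Theorem~\ref{thm:ideal-lattice}, $\Ind_A^B$ is an order isomorphism of the whole ideal lattices, and it maps $\mathcal J$ bijectively onto $\mathcal I_\Lambda$. Therefore its restriction is automatically an order isomorphism for the inclusion orders. The parametrization hypothesis only records that this restricted map is the one induced by $\Lambda$.

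For the lattice statement, the restricted map is a bijective order isomorphism between two sublattices of $\mathcal I(A)$ and $\mathcal I(B)$. An order isomorphism between lattices preserves all existing meets and joins, since these are determined by the order. Joins and meets in $\mathcal J$ agree with those in $\mathcal I(A)$ because $\mathcal J$ is a sublattice, and similarly for $\mathcal I_\Lambda$. Moreover, $\Ind_A^B$ itself preserves lattice operations on all of $\mathcal I(A)$. So the restricted map is a lattice isomorphism, and it equals the composite of the lattice isomorphisms $\Lambda\cong\mathcal J$ and $\Lambda\cong\mathcal I_\Lambda$.

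The only place requiring care is the well-definedness in the second paragraph: the map $J_\lambda\mapsto I_\lambda$ must not depend on the choice of $\lambda$. This follows either from injectivity of $\lambda\mapsto J_\lambda$, or directly from the fact that $I_\lambda=\Ind_A^B(J_\lambda)$ is determined by $J_\lambda$.
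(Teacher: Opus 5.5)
Your proof is correct and follows essentially the paper's argument: apply Proposition~\ref{prop:localized-submodule} to each $\lambda$, then note that on the two families induction is the composite of the two parametrizations (it corresponds to the identity on $\Lambda$), which gives both the order and the lattice statements. Your alternative remark is also valid: $\Ind_A^B$ is already an order isomorphism of the full ideal lattices by Theorem~\ref{thm:ideal-lattice}, so its restriction from $\{J_\lambda\}$ onto $\{I_\lambda\}$ is automatically an order isomorphism, and the parametrization hypotheses are not actually needed for that part.
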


\begin{proof}
The first assertion follows by applying
Proposition~\ref{prop:localized-submodule} for each
$\lambda\in\Lambda$.

Under the additional order hypotheses, Morita induction sends
$J_\lambda$ to $I_\lambda$ for every $\lambda$, so under the two
parametrizations it corresponds to the identity map on $\Lambda$.
Hence its restriction to the two families is an order isomorphism.
Under the final hypotheses, the same observation shows that the
restricted map preserves finite meets and joins, and is therefore a
lattice isomorphism.
\end{proof}

\section{Dynamical ideals of \texorpdfstring{$L^p$}{Lp}-operator algebras of \texorpdfstring{\'etale}{etale} groupoids}
\label{sec:groupoid-dynamical-ideals}

The localized submodule criterion from the preceding section is useful when a
family of distinguished ideals is detected by the part of a Morita bimodule on
which those ideals act.  We now illustrate this mechanism for dynamical ideals
in reduced groupoid $L^p$-operator algebras.  We recall the groupoid vocabulary
needed for the argument, but refer to \cite{GardellaLupini2017,BardadynKwasniewskiMcKee2025}
for the construction and basic properties of reduced groupoid
$L^p$-operator algebras.

Throughout this section, $G$ and $H$ are locally compact Hausdorff
\'etale groupoids.  We use standard notation and terminology for
such groupoids, their actions, invariant subsets, and reductions;
see, for example, \cite{Williams2019,Paterson1999}.  We write
$G^{(0)}$ for the unit space of $G$, and $r,s$ for its range and
source maps.  We recall only the notions needed below. 
A left $G$-space consists of a locally
compact Hausdorff space $Z$, a continuous open anchor map
$r_Z:Z\to G^{(0)}$, and a continuous action
\[
 G*Z=\{(\gamma,z)\in G\times Z:s(\gamma)=r_Z(z)\}
 \longrightarrow Z,
 \qquad (\gamma,z)\longmapsto \gamma\cdot z,
\]
satisfying the usual unit and associativity identities.  The action is
\emph{free} if $\gamma\cdot z=z$ forces $\gamma=r_Z(z)$, and it is
\emph{proper} if
\[
 G*Z\longrightarrow Z\times Z,
 \qquad (\gamma,z)\longmapsto(\gamma\cdot z,z),
\]
is proper.  Right actions are defined analogously; for a right $H$-space we
denote the anchor map by $s_Z:Z\to H^{(0)}$.

\begin{defn}\label{def:groupoid-equivalence-dynamical}
A \emph{$(G,H)$-equivalence} is a locally compact Hausdorff space $Z$ which is
both a free and proper left $G$-space and a free and proper right $H$-space,
such that the two actions commute and the anchor maps induce homeomorphisms
\[
 Z/H\cong G^{(0)},
 \qquad
 G\backslash Z\cong H^{(0)}.
\]
\end{defn}

This is the usual notion of groupoid equivalence; see, for example,
\cite{MuhlyRenaultWilliams1987,Williams2019}.  In particular, $r_Z$ and $s_Z$ are
surjective.  Since $r_Z$ factors through $Z/H$ and $s_Z$ factors through
$G\backslash Z$, one has
\[
 r_Z(z\cdot\eta)=r_Z(z),
 \qquad
 s_Z(\gamma\cdot z)=s_Z(z)
\]
whenever the indicated actions are defined.  Moreover, the fibres of $s_Z$
are precisely the $G$-orbits and the fibres of $r_Z$ are precisely the
$H$-orbits.  Thus, if $s_Z(z)=s_Z(z')$, there is a unique $\gamma\in G$
with $z'=\gamma\cdot z$, and symmetrically on the right.

An open subset $U\subseteq G^{(0)}$ is \emph{invariant} if
\[
 r(\gamma)\in U \quad\Longrightarrow\quad s(\gamma)\in U
 \qquad(\gamma\in G).
\]
Equivalently, $r(\gamma)\in U$ if and only if $s(\gamma)\in U$.  For such
$U$, the reduction
\[
 G|_U=\{\gamma\in G:r(\gamma),s(\gamma)\in U\}
\]
is an open subgroupoid of $G$.
Let $\mathcal O(G)$ and $\mathcal O(H)$ denote the respective lattices
of open invariant subsets of $G^{(0)}$ and $H^{(0)}$.

\begin{lem}\label{lem:invariant-open-correspondence}
Let $Z$ be a $(G,H)$-equivalence.  For an open invariant subset
$U\subseteq G^{(0)}$, set
\[
 \Theta(U):=s_Z\bigl(r_Z^{-1}(U)\bigr)\subseteq H^{(0)}.
\]
Then $\Theta(U)$ is open and invariant, and
\begin{equation}\label{eq:anchor-localization}
 r_Z^{-1}(U)=s_Z^{-1}(\Theta(U)).
\end{equation}
The map
\[
 \Theta:\mathcal O(G)\longrightarrow\mathcal O(H)
\]
is a lattice isomorphism between the lattices of open invariant subsets, with
inverse
\[
 V\longmapsto r_Z\bigl(s_Z^{-1}(V)\bigr).
\]
\end{lem}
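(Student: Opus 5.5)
The plan is to prove the identity \eqref{eq:anchor-localization} first and derive everything else from it. The key observation is that $r_Z^{-1}(U)$ is a union of $G$-orbits, i.e.\ of fibres of $s_Z$.

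\emph{Step 1: $r_Z^{-1}(U)$ is saturated for $s_Z$.} Let $z\in r_Z^{-1}(U)$ and $s_Z(z')=s_Z(z)$. Then $z'=\gamma\cdot z$ for some $\gamma\in G$ with $s(\gamma)=r_Z(z)\in U$. The anchor satisfies $r_Z(\gamma\cdot z)=r(\gamma)$, and invariance of $U$ gives $r(\gamma)\in U$, so $z'\in r_Z^{-1}(U)$. Hence $r_Z^{-1}(U)=s_Z^{-1}(s_Z(r_Z^{-1}(U)))=s_Z^{-1}(\Theta(U))$, which is \eqref{eq:anchor-localization}.

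\emph{Step 2: openness and invariance of $\Theta(U)$.} For openness, I would use that $s_Z$ is an open map, being the anchor of the right $H$-space. Alternatively, $s_Z$ factors through the quotient map $Z\to G\backslash Z$, which is open because quotient maps by groupoid actions are open, followed by the homeomorphism $G\backslash Z\cong H^{(0)}$. Either way $\Theta(U)$ is the image of an open set under an open map, hence open.

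For invariance, let $\eta\in H$ with $r(\eta)\in\Theta(U)$. Choose $z$ with $s_Z(z)=r(\eta)$; by Step 1, $z\in r_Z^{-1}(U)$. Then $z\cdot\eta$ is defined, $r_Z(z\cdot\eta)=r_Z(z)\in U$ and $s_Z(z\cdot\eta)=s(\eta)$. Thus $s(\eta)\in\Theta(U)$.

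\emph{Step 3: $\Theta$ is a lattice isomorphism.} By symmetry (interchanging the roles of $G$ and $H$), $\Phi(V):=r_Z(s_Z^{-1}(V))$ is open and invariant, and $s_Z^{-1}(V)=r_Z^{-1}(\Phi(V))$. Surjectivity of $r_Z$ gives
\[
 \Phi(\Theta(U))=r_Z\bigl(s_Z^{-1}(\Theta(U))\bigr)=r_Z\bigl(r_Z^{-1}(U)\bigr)=U,
\]
and symmetrically $\Theta(\Phi(V))=V$. Both maps are clearly monotone, so $\Theta$ is an order isomorphism with inverse $\Phi$. The sets $\mathcal O(G)$ and $\mathcal O(H)$ are lattices under union and intersection, since invariance is preserved by both operations, and an order isomorphism between lattices preserves meets and joins. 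One can also see this directly, since preimages and images of $s_Z$-saturated sets commute with unions and intersections.

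The only delicate point I expect is the openness of $s_Z$ in Step 2. This depends on the convention that anchor maps of groupoid spaces are open, which the paper builds into its definition of a $G$-space. If that convention is not taken as available, the fallback is the openness of the orbit quotient map combined with the homeomorphism $G\backslash Z\cong H^{(0)}$.
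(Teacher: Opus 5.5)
Your proposal is correct and follows essentially the same route as the paper. Both arguments use the $G$-orbits being the fibres of $s_Z$ to get $r_Z^{-1}(U)=s_Z^{-1}(\Theta(U))$, the openness of $s_Z$, the computation with $z\cdot\eta$ for invariance, and surjectivity of the anchors to show the two maps are mutually inverse. The only difference is ordering: you prove the saturation identity first and use it to place $z$ in $r_Z^{-1}(U)$, whereas the paper takes $z$ directly from the definition of $\Theta(U)$.
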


\begin{proof}
Since $r_Z^{-1}(U)$ is open and $s_Z$ is open, $\Theta(U)$ is open.  To see
that it is invariant, suppose that $\eta\in H$ and
$r(\eta)\in\Theta(U)$.  Choose $z\in Z$ with
$r_Z(z)\in U$ and $s_Z(z)=r(\eta)$.  Then $z\cdot\eta$ is defined and
\[
 r_Z(z\cdot\eta)=r_Z(z)\in U,
 \qquad
 s_Z(z\cdot\eta)=s(\eta),
\]
so $s(\eta)\in\Theta(U)$.

The inclusion
$r_Z^{-1}(U)\subseteq s_Z^{-1}(\Theta(U))$ follows from the definition.
Conversely, let $z'\in s_Z^{-1}(\Theta(U))$.  There is $z\in Z$ such that
$r_Z(z)\in U$ and $s_Z(z)=s_Z(z')$.  Since the fibres of $s_Z$ are the
$G$-orbits, $z'=\gamma\cdot z$ for some $\gamma\in G$.  Hence
\[
 s(\gamma)=r_Z(z)\in U,
 \qquad
 r(\gamma)=r_Z(z').
\]
Invariance of $U$ gives $r_Z(z')\in U$.  This
proves \eqref{eq:anchor-localization}.

Define
\[
 \Psi(V):=r_Z\bigl(s_Z^{-1}(V)\bigr)
\]
for open invariant $V\subseteq H^{(0)}$.  The symmetric argument shows that
$\Psi(V)$ is open and invariant and that
$s_Z^{-1}(V)=r_Z^{-1}(\Psi(V))$.  Since both anchor maps are surjective,
\eqref{eq:anchor-localization} gives
\[
 \Psi(\Theta(U))=U,
 \qquad
 \Theta(\Psi(V))=V.
\]
Both maps preserve inclusions, and hence they are mutually inverse lattice
isomorphisms.
\end{proof}

For $p\in[1,\infty]$, let $F^p_{\mathrm{red}}(G)$ denote the reduced
$L^p$-operator algebra of $G$.  If $U\subseteq G^{(0)}$ is open and
invariant, then $C_c(G|_U)$ is a two-sided ideal of $C_c(G)$, and therefore
\[
 I_U^G:=\overline{C_c(G|_U)}^{\,F^p_{\mathrm{red}}(G)}
\]
is a closed two-sided ideal of $F^p_{\mathrm{red}}(G)$.

\begin{defn}\label{def:dynamical-ideal}
A closed ideal of $F^p_{\mathrm{red}}(G)$ is called \emph{dynamical} if it is
of the form $I_U^G$ for some open invariant subset
$U\subseteq G^{(0)}$.
\end{defn}

By \cite[Lemma~5.25]{ChungDu2026}, this agrees with the
definition of dynamical ideal used in \cite[Definition~5.22]{ChungDu2026}.

We will also use the elementary fact that $F^p_{\mathrm{red}}(G)$ has a
contractive approximate identity contained in $C_c(G^{(0)})$.  Indeed,
direct the compact subsets $K\subseteq G^{(0)}$ by inclusion and choose
$h_K\in C_c(G^{(0)})$ such that $0\leq h_K\leq1$ and $h_K=1$ on $K$.
A function on the unit space acts in every regular representation by
multiplication, so
\[
 \|h_K\|_{F^p_{\mathrm{red}}(G)}\leq1.
\]
For $f\in C_c(G)$, once $K$ contains
$r(\supp(f))\cup s(\supp(f))$, one has
$h_K*f=f=f*h_K$.  Density of $C_c(G)$ gives the claim.  The same observation
applies to $H$.

We next recall only the part of the standard groupoid-equivalence Morita
context which is needed below.  Put
\[
 A=F^p_{\mathrm{red}}(G),
 \qquad
 B=F^p_{\mathrm{red}}(H).
\]
Suppose that the standard algebraic bimodule $C_c(Z)$ completes to the
$A$--$B$ bimodule $E^{>}$ in a Morita equivalence between $A$ and $B$, and
that the completed actions extend the algebraic formulas
\begin{align}
 (a\cdot\xi)(z)
 &=\sum_{r(\gamma)=r_Z(z)}
      a(\gamma)\xi(\gamma^{-1}\cdot z),
 \label{eq:groupoid-left-action}\\
 (\xi\cdot b)(z)
 &=\sum_{r(\eta)=s_Z(z)}
      \xi(z\cdot\eta)b(\eta^{-1}),
 \label{eq:groupoid-right-action}
\end{align}
for $a\in C_c(G)$, $b\in C_c(H)$ and $\xi\in C_c(Z)$.  These are the usual
module actions associated with a groupoid equivalence.  Notice that the
preservation argument below uses no further information about the norms or
pairings in the completed Morita context.

\begin{prop}\label{prop:groupoid-dynamical-localization}
Let $Z$ be a $(G,H)$-equivalence, let $p\in[1,\infty]$, and assume that the
standard bimodule $C_c(Z)$ completes to a Morita equivalence between
\[
 A=F^p_{\mathrm{red}}(G)
 \quad\text{and}\quad
 B=F^p_{\mathrm{red}}(H)
\]
as above.  If $U\subseteq G^{(0)}$ is open and invariant and
$V=\Theta(U)$ is the corresponding open invariant subset of $H^{(0)}$, then
\begin{equation}\label{eq:dynamical-localized-submodule}
 \overline{I_U^G E^{>}}
 =
 \overline{E^{>}I_V^H}.
\end{equation}
Consequently,
\[
 \Ind_A^B(I_U^G)=I_V^H.
\]
Thus Morita induction carries dynamical ideals of $A$ onto dynamical ideals
of $B$.
\end{prop}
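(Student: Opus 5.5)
The plan is to prove \eqref{eq:dynamical-localized-submodule} by showing that both sides equal the closure in $E^{>}$ of
\[
 C_c(W),\qquad W:=r_Z^{-1}(U)=s_Z^{-1}(V).
\]
Here the two descriptions of $W$ agree by \eqref{eq:anchor-localization}, and $W$ is open in $Z$. The consequence $\Ind_A^B(I_U^G)=I_V^H$ then follows at once from Proposition~\ref{prop:localized-submodule}. Surjectivity onto the dynamical ideals of $B$ follows from Lemma~\ref{lem:invariant-open-correspondence}, because $\Theta$ is a bijection.

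First I show $\overline{I_U^GE^{>}}\subseteq\overline{C_c(W)}$. By density of $C_c(G|_U)$ in $I_U^G$, density of $C_c(Z)$ in $E^{>}$, and continuity of the action, it suffices to treat $a\in C_c(G|_U)$ and $\xi\in C_c(Z)$. In \eqref{eq:groupoid-left-action} a nonzero term needs $\gamma\in\supp a$, so $r_Z(z)=r(\gamma)\in U$. Hence $a\cdot\xi$, which lies in $C_c(Z)$ by the standard algebraic theory, is supported in $r_Z^{-1}(U)=W$. The symmetric computation with \eqref{eq:groupoid-right-action} gives $\xi\cdot b\in C_c(W)$ for $b\in C_c(H|_V)$: the term needs $\eta^{-1}\in\supp b$, so $s_Z(z)=r(\eta)=s(\eta^{-1})\in V$. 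Thus $\overline{E^{>}I_V^H}\subseteq\overline{C_c(W)}$ as well.

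For the reverse inclusions I use the contractive approximate identities. Fix $\xi\in C_c(W)$ and put $K:=r_Z(\supp\xi)$, a compact subset of $U$. Choose $h\in C_c(U)\subseteq C_c(G|_U)$ with $0\le h\le1$ and $h=1$ on $K$; this needs only local compactness of $U$, and paracompactness is not required here. Since $G^{(0)}$ is open in the étale groupoid $G$, only $\gamma=r_Z(z)$ contributes to $(h\cdot\xi)(z)$, so $h\cdot\xi=(h\circ r_Z)\,\xi=\xi$. Hence $\xi\in I_U^GE^{>}$. Likewise, $\xi=\xi\cdot h'$ for some $h'\in C_c(V)$ with $h'=1$ on $s_Z(\supp\xi)$, so $\xi\in E^{>}I_V^H$. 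Taking closures gives both sides equal to $\overline{C_c(W)}$.

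The main obstacle is in the first inclusion: I need $a\cdot\xi$ to be a genuine element of $C_c(Z)$, computed by the algebraic formula, inside the completion. This holds because the completed actions are assumed to extend \eqref{eq:groupoid-left-action} and \eqref{eq:groupoid-right-action}. I must also check that the support argument needs no norm information. It does not: it is purely pointwise on $C_c$-functions, and all passages to the completion use only continuity and density.
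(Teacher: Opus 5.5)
Your proposal is correct and follows essentially the same route as the paper. Both arguments identify each side of \eqref{eq:dynamical-localized-submodule} with the closure of $C_c(r_Z^{-1}(U))$, using the pointwise support computation from the action formulas together with a bump function $h\in C_c(U)$ (respectively $h'\in C_c(V)$) that acts as a local unit, and then conclude via the localized submodule criterion and the bijectivity of $\Theta$.
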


\begin{proof}
Let
\[
 Z_U:=r_Z^{-1}(U)=s_Z^{-1}(V),
\]
where the equality follows from Lemma~\ref{lem:invariant-open-correspondence}.
We first prove the algebraic localization identity
\begin{equation}\label{eq:algebraic-groupoid-localization}
 C_c(G|_U)\cdot C_c(Z)
 =C_c(Z_U)
 =C_c(Z)\cdot C_c(H|_V).
\end{equation}
Here the dot denotes the set of elementary module products.

Let $a\in C_c(G|_U)$ and $\xi\in C_c(Z)$.  If
$(a\cdot\xi)(z)\neq0$, formula \eqref{eq:groupoid-left-action} shows that
there is $\gamma\in G|_U$ with $r(\gamma)=r_Z(z)$.  Hence
$r_Z(z)\in U$, so $a\cdot\xi\in C_c(Z_U)$.  This gives
\[
 C_c(G|_U)\cdot C_c(Z)\subseteq C_c(Z_U).
\]
Conversely, let $\xi\in C_c(Z_U)$.  The set
$r_Z(\supp(\xi))$ is a compact subset of $U$.  Choose $h\in C_c(U)$ with
$h=1$ on $r_Z(\supp(\xi))$, and regard $h$ as an element of
$C_c(G|_U)$ supported on the unit space.  Then
\[
 (h\cdot\xi)(z)=h(r_Z(z))\xi(z)=\xi(z),
\]
so $\xi\in C_c(G|_U)\cdot C_c(Z)$.  This proves the first equality in
\eqref{eq:algebraic-groupoid-localization}.  The second is symmetric: for
$\xi\in C_c(Z_U)$ choose $k\in C_c(V)$ equal to $1$ on
$s_Z(\supp(\xi))$ and use
\[
 (\xi\cdot k)(z)=\xi(z)k(s_Z(z)).
\]

Let
\[
 E_U^{>}:=\overline{C_c(Z_U)}^{\,E^{>}}.
\]
The first equality in \eqref{eq:algebraic-groupoid-localization} gives
$E_U^{>}\subseteq\overline{I_U^G E^{>}}$.  For the reverse containment,
let $a\in I_U^G$ and $\xi\in E^{>}$.  Approximate $a$ by elements of
$C_c(G|_U)$ and $\xi$ by elements of $C_c(Z)$.  Continuity of the module
action shows that $a\xi$ is a norm limit of elements of $C_c(Z_U)$.
Therefore
\[
 \overline{I_U^G E^{>}}=E_U^{>}.
\]
The same argument on the right gives
\[
 \overline{E^{>}I_V^H}=E_U^{>}.
\]
This proves \eqref{eq:dynamical-localized-submodule}.  The localized
submodule criterion from the preceding section now gives
$\Ind_A^B(I_U^G)=I_V^H$.

Finally, Lemma~\ref{lem:invariant-open-correspondence} shows that every open
invariant $V\subseteq H^{(0)}$ equals $\Theta(U)$ for some open invariant
$U\subseteq G^{(0)}$.  Hence every dynamical ideal on the $H$-side occurs as
the Morita induction of one on the $G$-side.
\end{proof}

The preceding proposition identifies the dynamical ideals pairwise without
requiring any further groupoid analysis.  To record the lattice statement, we
use the fact that
\[
 U\longmapsto I_U^G
\]
identifies the lattice of open invariant subsets of $G^{(0)}$ with the
lattice of dynamical ideals of $F^p_{\mathrm{red}}(G)$, and similarly for
$H$; see \cite[Proposition~5.27]{ChungDu2026}.

\begin{cor}\label{cor:dynamical-ideal-lattice}
Let $G$ and $H$ be locally compact Hausdorff \'{e}tale groupoids with
paracompact unit spaces, and let $Z$ be a $(G,H)$-equivalence.  Then,
for every $p\in[1,\infty]$, the Morita equivalence between
$F^p_{\mathrm{red}}(G)$ and $F^p_{\mathrm{red}}(H)$ constructed in
\cite[Theorem~4.10]{ChungDelfinWang2026} induces a lattice isomorphism
between their dynamical ideals. More precisely,
\[
 \Ind_{F^p_{\mathrm{red}}(G)}^{F^p_{\mathrm{red}}(H)}(I_U^G)
 =I_{\Theta(U)}^H
\]
for every open invariant $U\subseteq G^{(0)}$.
\end{cor}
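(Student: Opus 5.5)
The plan is to verify that the Morita equivalence of \cite[Theorem~4.10]{ChungDelfinWang2026} fits the hypotheses of Proposition~\ref{prop:groupoid-dynamical-localization}, and then to combine that proposition with Lemma~\ref{lem:invariant-open-correspondence} and the lattice identification \cite[Proposition~5.27]{ChungDu2026} through Corollary~\ref{cor:localized-families}.

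The first step is to check the standing hypotheses. Both $A=F^p_{\mathrm{red}}(G)$ and $B=F^p_{\mathrm{red}}(H)$ have contractive approximate identities drawn from $C_c$ of the unit spaces, as observed before Proposition~\ref{prop:groupoid-dynamical-localization}. The paracompactness assumption is exactly what \cite[Theorem~4.10]{ChungDelfinWang2026} needs to produce a Morita equivalence in the sense of Definition~\ref{def:morita}. I will check against that construction that its $A$--$B$ bimodule $E^{>}$ is a completion of $C_c(Z)$, and that the completed actions extend the formulas \eqref{eq:groupoid-left-action} and \eqref{eq:groupoid-right-action}. This is the main point requiring care. The risk is a convention mismatch: the cited construction might use the opposite bimodule, a different side, or a modular factor. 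If so, I would either use the symmetric form of the argument with $E^{<}$ or note that unit-space functions still act by pointwise multiplication through the anchor maps. That multiplication is all the proof of Proposition~\ref{prop:groupoid-dynamical-localization} actually uses.

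Given these checks, Proposition~\ref{prop:groupoid-dynamical-localization} yields $\Ind_A^B(I_U^G)=I_{\Theta(U)}^H$ for every open invariant $U$, which is the displayed formula. For the lattice statement, index both families by $\Lambda=\mathcal O(G)$. Set $J_U=I_U^G$ and $I_U=I_{\Theta(U)}^H$. By \cite[Proposition~5.27]{ChungDu2026}, $U\mapsto I_U^G$ is a lattice isomorphism onto the dynamical ideals of $A$, and $V\mapsto I_V^H$ is one onto the dynamical ideals of $B$. Lemma~\ref{lem:invariant-open-correspondence} shows that $\Theta$ is a lattice isomorphism $\mathcal O(G)\to\mathcal O(H)$, so $U\mapsto I_{\Theta(U)}^H$ is also a lattice isomorphism onto the dynamical ideals of $B$. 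The identity \eqref{eq:dynamical-localized-submodule} supplies the localized-submodule hypothesis for each $U$.

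Corollary~\ref{cor:localized-families} then gives that induction restricts to a lattice isomorphism between the two families. This requires that the families be sublattices of the ideal lattices, which I take from \cite{ChungDu2026}. Even without that, the restricted map is the composite of three lattice isomorphisms, so it preserves the lattice operations of the dynamical-ideal lattices, and that suffices for the stated conclusion.
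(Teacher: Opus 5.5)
Your proposal is correct and follows the paper's proof. The paper cites \cite[Theorem~4.10]{ChungDelfinWang2026} for the completion of $C_c(Z)$ with the standard actions, applies Proposition~\ref{prop:groupoid-dynamical-localization} to get $\Ind_{F^p_{\mathrm{red}}(G)}^{F^p_{\mathrm{red}}(H)}(I_U^G)=I^H_{\Theta(U)}$, and then obtains the lattice statement from Lemma~\ref{lem:invariant-open-correspondence} and \cite[Proposition~5.27]{ChungDu2026}; your routing of the last step through Corollary~\ref{cor:localized-families} is just a repackaging of the same composite-of-lattice-isomorphisms argument.
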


\begin{proof}
By \cite[Theorem~4.10]{ChungDelfinWang2026}, the bimodule
$C_c(Z)$ completes to a Morita equivalence between
$F^p_{\mathrm{red}}(G)$ and $F^p_{\mathrm{red}}(H)$.
Proposition~\ref{prop:groupoid-dynamical-localization} therefore gives
\[
\Ind_{F^p_{\mathrm{red}}(G)}^{F^p_{\mathrm{red}}(H)}(I_U^G)
=
I_{\Theta(U)}^H .
\]
The result follows from
Lemma~\ref{lem:invariant-open-correspondence} and the lattice
parametrization of dynamical ideals in
\cite[Proposition~5.27]{ChungDu2026}.
\end{proof}

\begin{rem}\label{rem:groupoid-equivalence-input}
The preservation argument above is independent of the construction of the
completed Morita equivalence.  It uses only that $C_c(Z)$ is dense in the
Morita bimodule and that the completed module actions extend the standard
algebraic actions on $C_c(Z)$.  Thus the essential input is the elementary
localization identity \eqref{eq:algebraic-groupoid-localization}, together
with the localized submodule criterion of
Proposition~\ref{prop:localized-submodule}.
\end{rem}

\section{Projection approximate identities in the \texorpdfstring{$\ell^p$}{l-p} uniform algebra}

Let $X$ be a metric space with bounded geometry and $1\leq p<\infty$.  Thus, for every $r\geq0$, letting $B(x,r)$ denote the ball of radius $r$ about $x\in X$, we have
\[
 \sup_{x\in X}|B(x,r)|<\infty.
\]
Recall that
\[
 \ell^p(X,\ell^p)
 =\Bigl(\bigoplus_{x\in X}\ell^p\Bigr)_{\ell^p}.
\]
For $T\in B(\ell^p(X,\ell^p))$, write $T=(T_{xy})_{x,y\in X}$ with $T_{xy}\in B(\ell^p)$.  The propagation of $T$ is
\[
 \Prop(T)=\sup\{d(x,y):T_{xy}\neq0\}
\]
with the convention $\Prop(0)=0$.
Here and below, a \emph{projection} on a Banach space means a bounded idempotent operator; no self-adjointness is intended unless $p=2$ and we explicitly say ``orthogonal projection.''

Following \cite{Chung2025}, let $U\C^p[X]$ be the algebra of finite-propagation operators $T\in B(\ell^p(X,\ell^p))$ for which
\[
 \sup_{x,y\in X}\rank(T_{xy})<\infty,
\]
and define
\[
 UB^p(X)=\overline{U\C^p[X]}^{\|\cdot\|}.
\]

\subsection{The Hilbert space model and the Banach space obstruction}

The projection construction below is motivated by the Hilbert space case.  Suppose first that $p=2$ and $R_1,\ldots,R_n$ are finite-rank operators on a Hilbert space.  If $P$ is the orthogonal projection onto
\[
 \sum_i\ran R_i+\sum_i\ran R_i^*,
\]
then
\[
 PR_i=R_i=R_iP
\]
for every $i$, and $\|P\|=1$.  

For $p\neq2$, these two requirements are naturally encoded on two
different spaces.  The relation $PR=R$ is ensured by requiring $P$ to
fix $\ran R\subseteq\ell^p$, whereas $RP=R$ is ensured by requiring
$P^*$ to fix $\ran R^*\subseteq(\ell^p)^*$.  Thus the Banach space
problem is to construct a uniformly bounded finite-rank projection
whose action on $\ell^p$ and whose adjoint action on $(\ell^p)^*$ fix
prescribed finite-dimensional subspaces simultaneously.  The
quantitative projection argument recalled below provides exactly this.

For Roe $C^*$-algebras, approximate units of projections were obtained by Higson and Roe \cite[Lemma~3.8]{HigsonRoe1994}.  For the standard discrete Roe module $\ell^2(X,\ell^2)=\bigoplus_{x\in X}\ell^2$,
Ewert and Meyer give a particularly transparent construction using
diagonal projections whose restriction to the copy of $\ell^2$
indexed by $x$ is a finite-rank coordinate projection
\cite[Proposition~2.19]{EwertMeyer2019}.  The ranks of these
coordinate projections may depend on $x$. In $U\C^p[X]$, by contrast, the ranks of the matrix
entries must be uniformly bounded.  This is the reason for the quantitative Banach space argument below.

\subsection{The uniform projection property}

We use the \emph{uniform projection property} introduced by
Pe\l czy\'nski and Rosenthal \cite{PelczynskiRosenthal1975}.
It is also called the \emph{uniform projection approximation
property}, in particular in \cite{Johnson1980} and \cite[Section~7]{Casazza2001}.  

\begin{defn}\label{def:upp}
A Banach space $V$ has the \emph{uniform projection property} if there exist a constant $K\geq1$ and a function $f:\N\to\N$ such that, for every finite-dimensional subspace $F\subseteq V$, there is a finite-rank projection $P\in B(V)$ satisfying
\[
 F\subseteq\ran P,
 \qquad
 \|P\|\leq K,
 \qquad
 \rank P\leq f(\dim F).
\]
\end{defn}

Pe\l czy\'nski and Rosenthal proved that every $L^r(\mu)$,
$1\leq r\leq\infty$, has the $1+$-uniform projection property;
see \cite{PelczynskiRosenthal1975} and the discussion in
\cite[Section~9]{Heinrich1980}.  Here $1+$ means that, for every
$\varepsilon>0$, the projections in
Definition~\ref{def:upp} may be chosen with norm at most
$1+\varepsilon$, with the rank bound allowed to depend on
$\varepsilon$.  In particular, $\ell^p$ has the uniform projection
property for every $1\leq p<\infty$.

We shall use the following quantitative consequence of the
construction in the proof of
\cite[Proposition~1.1]{Johnson1980}.  For each
$1\leq p<\infty$ there are a constant $C_p\geq1$ and a function
$h_p:\N\to\N$ such that, whenever
\[
 E\subseteq\ell^p,
 \qquad
 G\subseteq(\ell^p)^*
\]
are finite-dimensional subspaces satisfying
\[
 \dim E,\dim G\leq n,
\]
there is a finite-rank projection $P\in B(\ell^p)$ such that
\[
 P|_E=I_E,
 \qquad
 P^*|_G=I_G,
 \qquad
 \|P\|\leq C_p,
 \qquad
 \rank P\leq h_p(n).
\]
Johnson formulates the construction for finite-dimensional
subspaces of equal dimension.  For $n\geq1$, the form above follows
by enlarging $E$ and $G$, if necessary, to $n$-dimensional
subspaces of $\ell^p$ and $(\ell^p)^*$, respectively.

The preceding consequence of Johnson's argument immediately gives
the simultaneous absorption statement needed below. Compare \cite[Lemma~4.2]{Lusky2003} for the corresponding
two-sided absorption result without the quantitative rank estimate
needed below.

\begin{lem}[Simultaneous finite-rank projections]\label{lem:simultaneous-projection}
For each $1\leq p<\infty$, there exist a constant $C_p\geq1$ and a function $h_p:\N\to\N$ with the following property.  If $R_1,\ldots,R_s\in B(\ell^p)$ are finite-rank operators satisfying
\[
 \sum_{j=1}^s\rank(R_j)\leq d,
\]
then there is a finite-rank projection $Q\in B(\ell^p)$ such that
\[
 QR_j=R_jQ=R_j
 \qquad(1\leq j\leq s),
\]
\[
 \|Q\|\leq C_p,
 \qquad
 \rank Q\leq h_p(d).
\]
\end{lem}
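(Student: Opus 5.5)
We deduce the lemma directly from the quantitative consequence of Johnson's construction recalled just before the statement. Take $C_p$ as there, and set $h_p(d):=h_p^{\mathrm J}(\max(d,1))$, where $h_p^{\mathrm J}$ is the rank function from that consequence. Replacing $h_p^{\mathrm J}$ by $n\mapsto\max_{m\le n}h_p^{\mathrm J}(m)$ if necessary, we may assume it is nondecreasing. If $d=0$, all $R_j$ vanish and $Q=0$ works, so assume $d\geq1$.

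Put
\[
 E:=\sum_{j=1}^s\ran R_j\subseteq\ell^p,
 \qquad
 G:=\sum_{j=1}^s\ran R_j^*\subseteq(\ell^p)^*.
\]
Then $\dim E\leq\sum_j\rank R_j\leq d$. Since $\rank R_j^*=\rank R_j$ for finite-rank operators, also $\dim G\leq d$. Johnson's consequence with $n=d$ gives a finite-rank projection $Q$ with
\[
 Q|_E=I_E,\qquad Q^*|_G=I_G,\qquad \|Q\|\leq C_p,\qquad \rank Q\leq h_p(d).
\]

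Left absorption is immediate: for every $v\in\ell^p$ we have $R_jv\in E$, so $QR_jv=R_jv$, i.e.\ $QR_j=R_j$. For right absorption we pass to adjoints. For any $\phi\in(\ell^p)^*$ we have $R_j^*\phi\in G$, so $Q^*R_j^*\phi=R_j^*\phi$. Hence $(R_jQ)^*=Q^*R_j^*=R_j^*$. Since an operator on a Banach space is determined by its adjoint (by Hahn--Banach, the dual separates points), this gives $R_jQ=R_j$.

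The only point needing care is the monotonicity and dimension bookkeeping: the subspaces may have dimension strictly less than $d$, which is exactly what the "enlarge to $n$-dimensional subspaces" remark in the paper handles. No step is a genuine obstacle; the real content lies in Johnson's construction, which we are taking as given.
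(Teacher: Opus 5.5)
Your proposal is correct and follows the paper's proof essentially verbatim: the same subspaces $E=\sum_j\ran R_j$ and $G=\sum_j\ran R_j^*$, the same application of the Johnson-type consequence with $n=d$, and the same adjoint argument for right absorption. Making the rank function nondecreasing is harmless but unnecessary, since the recalled consequence already applies whenever $\dim E,\dim G\leq n$.
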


\begin{proof}
If $d=0$, then every $R_j$ is zero, and we may take $Q=0$.
Assume $d\geq1$, and set
\[
 E=\sum_{j=1}^s\ran R_j\subseteq\ell^p,
 \qquad
 G=\sum_{j=1}^s\ran R_j^*
       \subseteq(\ell^p)^*.
\]
Since $\rank R_j^*=\rank R_j$,
\[
 \dim E\leq d,
 \qquad
 \dim G\leq d.
\]

By the preceding consequence of Johnson's argument, there is a
finite-rank projection $Q\in B(\ell^p)$ such that
\[
 Q|_E=I_E,\qquad Q^*|_G=I_G,
 \qquad
 \|Q\|\leq C_p,\qquad
 \rank Q\leq h_p(d).
\]

Since $\ran R_j\subseteq E$,
\[
 QR_j=R_j.
\]
Since $\ran R_j^*\subseteq G$,
\[
 (R_jQ)^*=Q^*R_j^*=R_j^*,
\]
and hence $R_jQ=R_j$.  Therefore
\[
 QR_j=R_jQ=R_j
 \qquad(1\leq j\leq s),
\]
with the asserted uniform norm and rank bounds.
\end{proof}

\subsection{A projection local unit}

For $r\geq0$ set
\[
 \kappa_r:=\sup_{x\in X}|B(x,r)|<\infty.
\]

For a Banach algebra $D$ and a finite subset $\mathcal F\subseteq D$,
an element $e\in D$ is a \emph{two-sided local unit} for $\mathcal F$
if
\[
 ea=a=ae
 \qquad(a\in\mathcal F).
\]

\begin{prop}\label{prop:common-projection}
For every $1\leq p<\infty$ there is a constant $C_p\geq1$ such that every nonempty finite subset of $U\C^p[X]$ has a two-sided local unit which is a projection of norm at most $C_p$.
\end{prop}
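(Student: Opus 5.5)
The plan is to look for the local unit as a block-diagonal operator $Q=\bigoplus_{x\in X}Q_x$ on $\ell^p(X,\ell^p)$. Each $Q_x\in B(\ell^p)$ is to be a finite-rank projection produced by Lemma~\ref{lem:simultaneous-projection}. Such a $Q$ is an idempotent of propagation $0$. Its norm is $\sup_x\|Q_x\|\le C_p$, since it acts coordinatewise on an $\ell^p$-direct sum, and its entries have rank at most $\sup_x\rank Q_x$. Hence $Q\in U\C^p[X]$ as soon as the ranks $\rank Q_x$ are uniformly bounded.

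Let $\mathcal F=\{T^{(1)},\dots,T^{(m)}\}\subseteq U\C^p[X]$, with $r\ge\max_k\Prop(T^{(k)})$ and $N\ge\sup_{k,x,y}\rank T^{(k)}_{xy}$. For block-diagonal $Q$ we have $(QT)_{xy}=Q_xT_{xy}$ and $(TQ)_{xy}=T_{xy}Q_y$. The requirement $QT=T=TQ$ therefore reduces to two conditions: $Q_xT^{(k)}_{xy}=T^{(k)}_{xy}$ for all $y$, and $T^{(k)}_{yx}Q_x=T^{(k)}_{yx}$ for all $y$. These entries are nonzero only for $y\in B(x,r)$.

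Fix $x$ and apply Lemma~\ref{lem:simultaneous-projection} to the finite family
\[
 \{T^{(k)}_{xy}:1\le k\le m,\ y\in B(x,r)\}\cup\{T^{(k)}_{yx}:1\le k\le m,\ y\in B(x,r)\}.
\]
The total rank of this family is at most $d:=2m\kappa_rN$, which is independent of $x$. The lemma then gives $Q_x$ with $\|Q_x\|\le C_p$, $\rank Q_x\le h_p(d)$, and the required two-sided absorption: $Q_xR=R=RQ_x$ for every $R$ in the family. We only need the one-sided identities relevant to each index, and these follow immediately. The uniform rank bound $h_p(d)$ gives $Q\in U\C^p[X]$.

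The only subtle step is the uniformity in $x$, both of the norm bound $C_p$ and of the rank bound. This uniformity is exactly what the quantitative Johnson estimate built into Lemma~\ref{lem:simultaneous-projection}, together with bounded geometry through $\kappa_r$, supplies. One remaining point is that $Q$ is bounded on the direct sum; this holds because $Q$ is diagonal and $\|Q\|=\sup_x\|Q_x\|$.

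The constant $C_p$ depends only on $p$, as required. Matrix identities of the form $(QT)_{xy}=Q_xT_{xy}$ determine the operators: two bounded operators with equal matrix entries agree on finitely supported vectors, which are dense since $p<\infty$. Hence $QT^{(k)}=T^{(k)}=T^{(k)}Q$ for every $k$.
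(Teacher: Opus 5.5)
Your proposal is correct and is essentially the paper's proof: a block-diagonal $Q=\bigoplus_{x}Q_x$, with each $Q_x$ obtained from Lemma~\ref{lem:simultaneous-projection} applied to the entries $T^{(k)}_{xy}$ and $T^{(k)}_{yx}$ for $y\in B(x,r)$. Their total rank is bounded by $2m\kappa_rN$ uniformly in $x$, and the argument finishes with the entrywise check of $QT^{(k)}=T^{(k)}=T^{(k)}Q$; your remark that matrix entries determine a bounded operator on $\ell^p(X,\ell^p)$, via density of finitely supported vectors for $p<\infty$, only makes explicit a step the paper leaves implicit.
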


\begin{proof}
Let $T^{(1)},\ldots,T^{(m)}\in U\C^p[X]$.  Choose $r\geq0$ and $N\in\N$ such that
\[
 \Prop(T^{(j)})\leq r,
 \qquad
 \rank(T^{(j)}_{xy})\leq N
\]
for all $j,x,y$.  For each $x\in X$, consider the finite family of operators on $\ell^p$
\[
 \mathcal R_x
 :=\{T^{(j)}_{xy}:y\in B(x,r),\ 1\leq j\leq m\}
 \cup
 \{T^{(j)}_{yx}:y\in B(x,r),\ 1\leq j\leq m\}.
\]
There are at most $2m\kappa_r$ members, each of rank at most $N$, so
\[
 \sum_{R\in\mathcal R_x}\rank R\leq2m\kappa_rN
\]
uniformly in $x$.  By Lemma~\ref{lem:simultaneous-projection}, for each $x$ there is a projection $Q_x\in B(\ell^p)$ such that
\[
 Q_xR=RQ_x=R\qquad(R\in\mathcal R_x),
\]
\[
 \|Q_x\|\leq C_p,
 \qquad
 \rank Q_x\leq h_p(2m\kappa_rN).
\]

Define
\[
 Q=\bigoplus_{x\in X}Q_x
 \in B(\ell^p(X,\ell^p)).
\]
Then $Q^2=Q$, $\|Q\|\leq C_p$, and $Q$ has propagation zero.  Its nonzero matrix entries are the $Q_x$, whose ranks are uniformly bounded.  Hence $Q\in U\C^p[X]$.

For every $j,x,y$,
\[
 (QT^{(j)})_{xy}=Q_xT^{(j)}_{xy}=T^{(j)}_{xy}.
\]
Also
\[
 (T^{(j)}Q)_{xy}=T^{(j)}_{xy}Q_y.
\]
If $T^{(j)}_{xy}\neq0$, then $x\in B(y,r)$, so $T^{(j)}_{xy}$ occurs in the second family defining $\mathcal R_y$.  Hence $T^{(j)}_{xy}Q_y=T^{(j)}_{xy}$.  Therefore
\[
 QT^{(j)}=T^{(j)}=T^{(j)}Q
 \qquad(1\leq j\leq m).
\]
\end{proof}

\begin{thm}\label{thm:UBp-bai-projections}
Let $X$ be a metric space with bounded geometry and $1\leq p<\infty$.  Then $UB^p(X)$ has a bounded two-sided approximate identity consisting of projections.
\end{thm}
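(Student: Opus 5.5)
The plan is to index the approximate identity by finite subsets of the dense subalgebra $U\C^p[X]$ and to use Proposition~\ref{prop:common-projection} to produce the projections. Let $\Lambda$ be the set of nonempty finite subsets $\mathcal F\subseteq U\C^p[X]$, directed by inclusion. For each $\mathcal F\in\Lambda$, Proposition~\ref{prop:common-projection} gives a projection $e_{\mathcal F}\in U\C^p[X]\subseteq UB^p(X)$ with $\|e_{\mathcal F}\|\leq C_p$ and
\[
 e_{\mathcal F}T=T=Te_{\mathcal F}\qquad(T\in\mathcal F).
\]
Thus $(e_{\mathcal F})_{\mathcal F\in\Lambda}$ is a net of idempotents in $UB^p(X)$ that is bounded by $C_p$ uniformly in $\mathcal F$.

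Next I check convergence on the dense subalgebra. If $T\in U\C^p[X]$, then for every $\mathcal F\supseteq\{T\}$ we have $e_{\mathcal F}T-T=0$ and $Te_{\mathcal F}-T=0$. So the net converges to $T$ from both sides, and in fact is eventually constant.

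Finally I extend to the closure by an $\varepsilon/3$ argument. Let $S\in UB^p(X)$ and $\varepsilon>0$. By definition of $UB^p(X)$ there is $T\in U\C^p[X]$ with $\|S-T\|<\varepsilon$. Then for all $\mathcal F\ni T$,
\[
 \|e_{\mathcal F}S-S\|\leq\|e_{\mathcal F}(S-T)\|+\|e_{\mathcal F}T-T\|+\|T-S\|\leq (C_p+1)\varepsilon,
\]
and the same estimate holds for $\|Se_{\mathcal F}-S\|$. Hence $e_{\mathcal F}S\to S$ and $Se_{\mathcal F}\to S$, so the net is a bounded two-sided approximate identity of projections.

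There is no genuine obstacle at this stage. The difficult part, namely the uniform norm bound $C_p$ together with the rank control that keeps each local unit inside $U\C^p[X]$, was already handled in Proposition~\ref{prop:common-projection} via Lemma~\ref{lem:simultaneous-projection}. The only point needing care is that $C_p$ does not depend on the finite set $\mathcal F$, and this is exactly what Proposition~\ref{prop:common-projection} provides. If a sequence were wanted rather than a net, one could exhaust a countable dense set when $X$ is countable, but the theorem as stated requires only a net.
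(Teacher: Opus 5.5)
Your proof is correct and is essentially the paper's argument. Both rely on Proposition~\ref{prop:common-projection} together with the fact that $C_p$ is independent of the finite set. The only difference is indexing: you use finite subsets of $U\C^p[X]$, so the net is eventually an exact two-sided unit on the dense subalgebra, whereas the paper uses pairs $(F,\varepsilon)$ with $F\subseteq UB^p(X)$ and chosen approximants. That difference is cosmetic.

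Your closing aside about obtaining a sequence is mistaken, though it does not affect the proof. For infinite $X$, the operators $\bigoplus_x\lambda_xP$, with $P$ a fixed norm-one rank-one projection on $\ell^p$ and $\lambda\in\ell^\infty(X)$, form an isometric copy of $\ell^\infty(X)$ inside $U\C^p[X]$. Hence $UB^p(X)$ has no countable dense set to exhaust, and the theorem needs only a net in any case.
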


\begin{proof}
Fix a finite set $F=\{S^{(1)},\ldots,S^{(m)}\}\subseteq UB^p(X)$ and $\varepsilon>0$.  Let $C_p$ be as in Proposition~\ref{prop:common-projection}.  Since $U\C^p[X]$ is dense in $UB^p(X)$, choose $T^{(j)}\in U\C^p[X]$ such that
\[
 \|S^{(j)}-T^{(j)}\|<\frac{\varepsilon}{C_p+1}.
\]
By Proposition~\ref{prop:common-projection}, there is a projection $Q\in U\C^p[X]$ with $\|Q\|\leq C_p$ and
\[
 QT^{(j)}=T^{(j)}=T^{(j)}Q
\]
for every $j$.  Hence
\begin{align*}
 \|QS^{(j)}-S^{(j)}\|
 &\leq \|Q(S^{(j)}-T^{(j)})\|+\|T^{(j)}-S^{(j)}\|<\varepsilon,\\
 \|S^{(j)}Q-S^{(j)}\|
 &\leq \|(S^{(j)}-T^{(j)})Q\|+\|T^{(j)}-S^{(j)}\|<\varepsilon.
\end{align*}

For each pair $(F,\varepsilon)$ choose such a projection
$Q_{F,\varepsilon}$.  Order these pairs by
\[
(F,\varepsilon)\preceq(F',\varepsilon')
\quad\Longleftrightarrow\quad
F\subseteq F'
\ \text{ and }\ 
\varepsilon'\leq\varepsilon.
\]
Then $(Q_{F,\varepsilon})$ is a net of projections with
$\|Q_{F,\varepsilon}\|\leq C_p$.  Given $S\in UB^p(X)$ and
$\delta>0$, for every sufficiently large $(F,\varepsilon)$ we have
$S\in F$ and $\varepsilon<\delta$, and hence
\[
\|Q_{F,\varepsilon}S-S\|<\delta,
\qquad
\|SQ_{F,\varepsilon}-S\|<\delta.
\]
Thus $(Q_{F,\varepsilon})$ is a bounded two-sided approximate identity
consisting of projections.
\end{proof}

\begin{rem}
For the ideal-induction results above, it would suffice to know that
$UB^p(X)$ has a bounded approximate identity.  Theorem~\ref{thm:UBp-bai-projections}
gives the stronger conclusion that one may choose such an approximate
identity to consist of projections.  The proof also explains the difference with $p=2$.  In Hilbert space the local projections may be chosen orthogonal and hence contractive.  For $p\neq2$, the projections furnished by the Banach space
projection argument need not be contractive; what the uniform projection property gives is a bound depending only on $p$.
\end{rem}

\section{Geometric ideals in \texorpdfstring{$\ell^p$}{l-p} uniform algebras}

We now apply Theorem \ref{thm:geometric-preservation} to the Morita equivalence constructed in \cite{Chung2025}.  Let $X$ be a metric space with bounded geometry and let $1\leq p<\infty$.  
Write $1/p+1/q=1$, with $q=\infty$ when $p=1$.

Let $\C_u[X]$ denote the algebra of finite-propagation $X\times X$ complex matrices with uniformly bounded entries.  
Bounded geometry implies that every such matrix defines a bounded operator on $\ell^p(X)$.  The $\ell^p$ uniform Roe algebra is
\[
 B_u^p(X):=\overline{\C_u[X]}^{\|\cdot\|_{B(\ell^p(X))}}.
\]
As in Section~6, $U\C^p[X]$ denotes the algebra of finite-propagation operator-valued matrices on $\ell^p(X,\ell^p)$ with uniformly bounded matrix-entry ranks, and $UB^p(X)$ is its operator-norm closure.

Let $E_{\mathrm{alg}}^{>}$ be the space of finite-propagation $X\times X$ matrices with uniformly bounded entries in $\ell^p$, regarded as operators
\[
 \ell^p(X)\longrightarrow\ell^p(X,\ell^p).
\]
Let $E_{\mathrm{alg}}^{<}$ be the corresponding space of finite-propagation matrices with uniformly bounded entries in $\ell^q$, regarded as operators
\[
 \ell^p(X,\ell^p)\longrightarrow\ell^p(X).
\]
Let $E^{>}$ and $E^{<}$ be their operator-norm completions.  Theorem~3.1 of \cite{Chung2025} shows that
\[
 ({}_{\Bup}E^{<}_{\UBp},{}_{\UBp}E^{>}_{\Bup})
\]
is a Morita equivalence.  The $B_u^p(X)$-valued pairing is matrix multiplication
\[
 \langle e^{<},e^{>}\rangle_{\Bup}=e^{<}e^{>},
\]
and the $UB^p(X)$-valued pairing is
\[
 {}_{\UBp}\langle e^{>},e^{<}\rangle=e^{>}e^{<},
\]
where the latter has operator-valued matrix entries.

We call a closed ideal $I\triangleleft B_u^p(X)$ \emph{geometric} if
\[
 \overline{I\cap \C_u[X]}=I,
\]
and a closed ideal $J\triangleleft UB^p(X)$ \emph{geometric} if
\[
 \overline{J\cap U\C^p[X]}=J.
\]

The algebraic Morita data form exactly the dense subcontext required in Section~3.

\begin{prop}\label{prop:roe-dense-context}
The quadruple
\[
 \bigl(U\C^p[X],\C_u[X],E_{\mathrm{alg}}^{<},E_{\mathrm{alg}}^{>}\bigr)
\]
is a compatible dense subcontext of the Morita equivalence between $UB^p(X)$ and $B_u^p(X)$.
\end{prop}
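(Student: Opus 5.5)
The plan is to verify the clauses of Definition~\ref{def:dense-subcontext} one at a time, using only the matrix descriptions of the four algebraic objects. Each check reduces to three ingredients. First, propagation is subadditive under matrix products. Second, bounded geometry makes every sum $\sum_z S_{xz}T_{zy}$ a sum over at most $\kappa_r$ nonzero terms, with $\kappa_r$ uniform in $x,y$. Third, the bounds on the entries (in norm or in rank) are preserved under such finite sums.

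\textbf{Density.} $\C_u[X]$ and $U\C^p[X]$ are dense by definition. $E^{>}$ and $E^{<}$ are defined as the operator-norm completions of $E_{\mathrm{alg}}^{>}$ and $E_{\mathrm{alg}}^{<}$, so those are dense as well. That $\C_u[X]$ and $U\C^p[X]$ are subalgebras is standard, or follows from the argument below.

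\textbf{Module stability.} Take $a\in U\C^p[X]$ with $\Prop(a)\le r$ and entry ranks at most $N$, and $\xi\in E_{\mathrm{alg}}^{>}$ with entries $\xi_{zy}\in\ell^p$, $\sup\|\xi_{zy}\|<\infty$ and $\Prop(\xi)\le s$. Then
\[
 (a\xi)_{xy}=\sum_z a_{xz}\xi_{zy}
\]
has at most $\kappa_r$ terms, each of norm at most $\|a\|\sup\|\xi_{zy}\|$, using $\|a_{xz}\|\le\|a\|$. The product has propagation at most $r+s$. So $a\xi\in E_{\mathrm{alg}}^{>}$; the finite-rank condition on $a$ is not even needed here.

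For $\xi b$ with $b\in\C_u[X]$, the entries are scalar combinations $\sum_z\xi_{xz}b_{zy}$ of uniformly bounded vectors, again with at most $\kappa$ terms. The checks $b\eta$ and $\eta a$ for $\eta\in E_{\mathrm{alg}}^{<}$ are the same. For $\eta a$ the entries are $\sum_z\eta_{xz}\circ a_{zy}$, where $\eta_{xz}\in\ell^q=(\ell^p)^*$ is composed with $a_{zy}\in B(\ell^p)$. Each term has norm at most $\|\eta_{xz}\|\,\|a\|$.

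\textbf{Pairing conditions.} For $\eta\in E_{\mathrm{alg}}^{<}$ and $\xi\in E_{\mathrm{alg}}^{>}$, the product $\eta\xi$ has scalar entries
\[
 \sum_z\langle\eta_{xz},\xi_{zy}\rangle,
\]
which are uniformly bounded sums with boundedly many terms, and $\eta\xi$ has finite propagation. So $\eta\xi\in\C_u[X]$.

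The product $\xi\eta$ has operator entries $\sum_z\xi_{xz}\otimes\eta_{zy}$, a sum of at most $\kappa$ rank-one operators. Its ranks are therefore uniformly bounded and its propagation is finite, so $\xi\eta\in U\C^p[X]$.

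\textbf{Main obstacle.} The real subtlety is identifying matrices with operators, not the combinatorics. One must make sure that the matrix entries of the operator products computed in $B(\ell^p(X,\ell^p))$, $B(\ell^p(X))$, etc.\ agree with the formal matrix products above. This holds because every sum is finite by bounded geometry, so composition can be computed on finitely supported vectors and then extended by continuity. One must also check that the entries of the operator product define the same element as the algebraic matrix. I would handle this by noting that all elements involved have finite propagation and entries bounded uniformly in $x,y$, so the matrices define bounded operators, as in \cite{Chung2025}. The identity then holds on the dense set of finitely supported vectors.
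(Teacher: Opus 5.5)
Your proposal is correct and follows the paper's proof essentially step by step. Both take density from the definitions and then check the four module-stability conditions and the two pairing conditions directly, using subadditivity of propagation, the bounded-geometry count of nonzero terms, $\|T_{xz}\|\le\|T\|$, and rank-one summands $\xi_{xz}\otimes\eta_{zy}$. Your extra paragraph on identifying formal matrix products with operator composition is a point the paper leaves implicit, and your treatment of it (finite sums, then extension from finitely supported vectors by continuity) is sound.
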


\begin{proof}
Density is part of the definitions and the construction in \cite{Chung2025}.  We check the algebraic stability conditions.

Let $r,s\geq0$.  Products of matrices of propagation at most $r$ and $s$ have propagation at most $r+s$.  Moreover, bounded geometry gives a uniform bound on the number of indices appearing in each matrix-product sum.  Therefore multiplication by a scalar finite-propagation matrix preserves the property of having uniformly bounded $\ell^p$- or $\ell^q$-valued entries.  This gives
\[
 \C_u[X]E_{\mathrm{alg}}^{<}\subseteq E_{\mathrm{alg}}^{<},
 \qquad
 E_{\mathrm{alg}}^{>}\C_u[X]\subseteq E_{\mathrm{alg}}^{>}.
\]

If $T\in U\C^p[X]$ and $e^{>}\in E_{\mathrm{alg}}^{>}$, the entries of $Te^{>}$ are finite sums of vectors of the form $T_{xz}e^{>}_{zy}$.  Since $\|T_{xz}\|\leq\|T\|$ and the entries of $e^{>}$ are uniformly bounded, bounded geometry again gives a uniform bound on the resulting vector entries.  Thus
\[
 U\C^p[X]E_{\mathrm{alg}}^{>}\subseteq E_{\mathrm{alg}}^{>}.
\]
Similarly, composing the $\ell^q$ vectors (regarded as functionals) in $e^{<}$ with the finite-rank matrix entries of $T$ gives
\[
 E_{\mathrm{alg}}^{<}U\C^p[X]\subseteq E_{\mathrm{alg}}^{<}.
\]

For the $B_u^p(X)$-valued pairing, the $(y,z)$-entry of $e^{<}e^{>}$ is a finite sum
\[
 \sum_w\langle e^{<}_{yw},e^{>}_{wz}\rangle.
\]
The support is bounded and the scalar entries are uniformly bounded, so
\[
 \langle E_{\mathrm{alg}}^{<},E_{\mathrm{alg}}^{>}\rangle_{\Bup}
 \subseteq \C_u[X].
\]
For the $UB^p(X)$-valued pairing, the $(x,y)$-entry of $e^{>}e^{<}$ is
\[
 \sum_z e^{>}_{xz}\otimes e^{<}_{zy},
\]
where $v\otimes\varphi$ denotes the rank-one operator $w\mapsto\varphi(w)v$ on $\ell^p$.
Each summand has rank at most one, and bounded geometry gives a uniform bound on the number of nonzero summands.  Hence the matrix entries have uniformly bounded rank and the propagation is finite.  Thus
\[
 {}_{\UBp}\langle E_{\mathrm{alg}}^{>},E_{\mathrm{alg}}^{<}\rangle
 \subseteq U\C^p[X].
\]
All the conditions of Definition~\ref{def:dense-subcontext} are verified.
\end{proof}

Since $B_u^p(X)$ is unital and $UB^p(X)$ has a bounded approximate identity by Theorem~\ref{thm:UBp-bai-projections}, Theorem~\ref{thm:geometric-preservation} applies.

\begin{thm}\label{thm:roe-geometric-correspondence}
Let $X$ be a metric space with bounded geometry and $1\leq p<\infty$.  The Morita equivalence of \cite{Chung2025} induces an order isomorphism
\[
 \mathcal G\bigl(B_u^p(X);\C_u[X]\bigr)
 \xrightarrow{\cong}
 \mathcal G\bigl(UB^p(X);U\C^p[X]\bigr).
\]
Equivalently, a closed ideal $I\triangleleft B_u^p(X)$ is geometric if and only if the induced ideal $\Ind_{B_u^p(X)}^{UB^p(X)}(I)$ is geometric.
\end{thm}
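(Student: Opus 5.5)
The plan is to obtain the statement as a direct specialization of Theorem~\ref{thm:geometric-preservation}, taking $A=UB^p(X)$, $B=B_u^p(X)$, $A_0=U\C^p[X]$ and $B_0=\C_u[X]$. The Morita equivalence is the one of \cite[Theorem~3.1]{Chung2025}, with bimodules $E^{>}$ and $E^{<}$ completing $E_{\mathrm{alg}}^{>}$ and $E_{\mathrm{alg}}^{<}$. The work therefore reduces to checking the hypotheses of Theorem~\ref{thm:geometric-preservation} in order.

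First, I will check the bounded approximate identity hypothesis on both sides. $B_u^p(X)$ is unital, since the identity matrix lies in $\C_u[X]$, so its unit serves as a bounded approximate identity. For $UB^p(X)$, I will invoke Theorem~\ref{thm:UBp-bai-projections}. Second, I will confirm that the data of \cite{Chung2025} fit Definition~\ref{def:morita}. This requires contractive pairings, both fullness conditions and all four nondegeneracy conditions, and I will take these from \cite[Theorem~3.1]{Chung2025}, which is exactly what that theorem asserts. Third, Proposition~\ref{prop:roe-dense-context} shows that $(U\C^p[X],\C_u[X],E_{\mathrm{alg}}^{<},E_{\mathrm{alg}}^{>})$ is a compatible dense subcontext.

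With these hypotheses in place, Theorem~\ref{thm:geometric-preservation} gives, for every closed ideal $I\triangleleft B_u^p(X)$, that $I\in\mathcal G(B_u^p(X);\C_u[X])$ if and only if $\Ind_{B_u^p(X)}^{UB^p(X)}(I)\in\mathcal G(UB^p(X);U\C^p[X])$. It also gives that the restriction of induction is an order isomorphism between these two sets. It remains to observe that the notion of geometric ideal defined just before Proposition~\ref{prop:roe-dense-context} coincides verbatim with $\C_u[X]$-geometricity and with $U\C^p[X]$-geometricity in the sense of Definition~\ref{def:core-geometric}. This yields the stated order isomorphism together with its reformulation.

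The step most likely to need care is the second one, namely matching the conventions of \cite{Chung2025} with Definition~\ref{def:morita}, in particular nondegeneracy of the actions of the nonunital algebra $UB^p(X)$. If \cite{Chung2025} does not state nondegeneracy explicitly, I would derive it directly. The approach would be to approximate an element $e^{>}\in E_{\mathrm{alg}}^{>}$ with propagation at most $r$ by $Qe^{>}$, where $Q=\bigoplus_x Q_x$ is a projection of the kind built in Proposition~\ref{prop:common-projection}, with $Q_x$ absorbing the finitely many vectors $e^{>}_{xy}$, $y\in B(x,r)$. Bounded geometry bounds the number of these vectors uniformly in $x$, and Lemma~\ref{lem:simultaneous-projection} then supplies $Q_x$ with uniformly bounded norm and rank. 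Hence $Q\in U\C^p[X]$ and $Qe^{>}=e^{>}$. The analogous argument with adjoints handles $E^{<}_{\mathrm{alg}}UB^p(X)$. Density of the algebraic modules then gives nondegeneracy of the completed actions.
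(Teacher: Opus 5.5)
Your proposal is correct and matches the paper's proof. The paper likewise applies Theorem~\ref{thm:geometric-preservation} to the compatible dense subcontext of Proposition~\ref{prop:roe-dense-context}, with the bounded approximate identity hypothesis supplied by unitality of $B_u^p(X)$ and by Theorem~\ref{thm:UBp-bai-projections}. It takes the remaining Morita axioms, including nondegeneracy, directly from \cite[Theorem~3.1]{Chung2025}, so your fallback nondegeneracy argument via local projections is not needed, although it is sound.
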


\begin{proof}
This is Theorem~\ref{thm:geometric-preservation} applied to Proposition~\ref{prop:roe-dense-context}.
\end{proof}

The companion paper \cite{ChungDu2026} proves that the geometric ideals of $B_u^p(X)$ form a lattice naturally identified with the lattice of ideals of the bounded coarse structure $\mathcal E_d$ of $X$.  We therefore obtain the following.

\begin{cor}\label{cor:UB-classification}
Let $X$ be a metric space with bounded geometry and $1\leq p<\infty$.  Then the geometric ideals of $UB^p(X)$ form a lattice, and there are lattice isomorphisms
\[
 \mathcal G\bigl(UB^p(X);U\C^p[X]\bigr)
 \cong
 \mathcal G\bigl(B_u^p(X);\C_u[X]\bigr)
 \cong
 \mathcal J(\mathcal E_d),
\]
where $\mathcal J(\mathcal E_d)$ denotes the lattice of ideals of the bounded coarse structure.  In particular, the geometric ideal lattice of $UB^p(X)$ is independent of $p$ via the common coarse-ideal lattice.
\end{cor}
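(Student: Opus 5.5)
The plan is to combine Theorem~\ref{thm:roe-geometric-correspondence} with the classification of geometric ideals of $B_u^p(X)$ from \cite{ChungDu2026} and with Corollary~\ref{cor:geometric-lattice}. By \cite{ChungDu2026}, $\mathcal G(B_u^p(X);\C_u[X])$ is a lattice isomorphic to $\mathcal J(\mathcal E_d)$. To invoke Corollary~\ref{cor:geometric-lattice}, I need more: the lattice operations on geometric ideals must be the ones inherited from $\mathcal I(B_u^p(X))$. So I must check that $\mathcal G(B_u^p(X);\C_u[X])$ is a sublattice of the closed ideal lattice, meaning it is closed under $I_1\cap I_2$ and under the closed sum $\overline{I_1+I_2}$.

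The join is immediate from the definition. If $I_1,I_2$ are geometric, then $(I_1\cap\C_u[X])+(I_2\cap\C_u[X])$ lies in $\overline{I_1+I_2}\cap\C_u[X]$. Its closure contains $I_1+I_2$, and hence contains $\overline{I_1+I_2}$.

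The meet is the main obstacle, since $\overline{(I_1\cap I_2)\cap\C_u[X]}=I_1\cap I_2$ is not formal. Here I would use the explicit description from \cite{ChungDu2026}. A geometric ideal has the form $I_{\mathcal J}$, the closure of the finite-propagation matrices supported in some $E\in\mathcal J$, for a coarse ideal $\mathcal J$. The classification should also give $I\cap\C_u[X]=\{T\in\C_u[X]:\supp T\in\mathcal J\}$, or at least the intersection formula $I_{\mathcal J_1}\cap I_{\mathcal J_2}=I_{\mathcal J_1\cap\mathcal J_2}$, which that paper presumably proves when establishing the lattice isomorphism. If that formula is only implicit there, I would prove it directly. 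Take $T\in I_{\mathcal J_1}\cap I_{\mathcal J_2}$. Use a partition/cutoff argument: compress by the diagonal multiplication operators $\chi_{A}$ for subsets $A$ coming from the coarse ideals, so as to approximate $T$ by truncations supported in a common member of $\mathcal J_1\cap\mathcal J_2$. The main tool is the conditional-expectation-type truncation to supports, which is contractive on $B(\ell^p)$ for diagonal cut-downs. Granting the formula, the meet of two geometric ideals is geometric, so $\mathcal G(B_u^p(X);\C_u[X])$ is a sublattice, and its lattice structure agrees with the one identified with $\mathcal J(\mathcal E_d)$.

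Corollary~\ref{cor:geometric-lattice} now applies. Its hypotheses (bounded approximate identities and a compatible dense subcontext) hold by Theorem~\ref{thm:UBp-bai-projections}, unitality of $B_u^p(X)$, and Proposition~\ref{prop:roe-dense-context}. The corollary shows that $\mathcal G(UB^p(X);U\C^p[X])$ is a sublattice of $\mathcal I(UB^p(X))$ and that the restricted induction map is a lattice isomorphism. Composing it with the isomorphism onto $\mathcal J(\mathcal E_d)$ gives the displayed chain. Independence of $p$ follows because $\mathcal J(\mathcal E_d)$ depends only on the metric space $X$.
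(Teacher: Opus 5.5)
Your plan is the paper's proof, which likewise just combines Corollary~\ref{cor:geometric-lattice}, Theorem~\ref{thm:roe-geometric-correspondence} and the classification of \cite{ChungDu2026}. Your explicit check that $\mathcal G(B_u^p(X);\C_u[X])$ is a sublattice of $\mathcal I(B_u^p(X))$ (joins formally, meets via diagonal cut-downs $\chi_{Y_1\cap Y_2}$ with $\Delta_{Y_i}\in\mathcal J_i$) spells out a hypothesis that the paper simply takes from the companion paper. One correction: your tentative formula $I\cap\C_u[X]=\{T\in\C_u[X]:\supp T\in\mathcal J\}$ is false for infinite $X$. For example, a diagonal matrix with nonzero entries tending to $0$ lies in the geometric ideal of compact operators but has infinite support, so in the cut-down argument you must take your approximants from $\{S\in\C_u[X]:\supp S\in\mathcal J_i\}$ rather than from $I_i\cap\C_u[X]$, which then proves $I_{\mathcal J_1}\cap I_{\mathcal J_2}=I_{\mathcal J_1\cap\mathcal J_2}$ as you intend.
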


\begin{proof}
Apply Corollary~\ref{cor:geometric-lattice}, Theorem~\ref{thm:roe-geometric-correspondence}, and the geometric ideal classification in \cite{ChungDu2026}.
\end{proof}

The quotient theorem also yields a useful companion statement.

\begin{cor}\label{cor:roe-quotients}
Let $I\triangleleft B_u^p(X)$ be a closed ideal, and let
\[
 J=\Ind_{B_u^p(X)}^{UB^p(X)}(I).
\]
Then
\[
 UB^p(X)/J
 \quad\text{and}\quad
 B_u^p(X)/I
\]
are Morita equivalent.  In particular, this holds for every corresponding pair of geometric ideals.
\end{cor}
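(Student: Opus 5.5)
This is a direct application of Theorem~\ref{thm:quotient-morita}, so the plan is to verify its hypotheses for the Roe-type Morita context and then read off the conclusion.

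First, I will check the standing assumptions of Section~2. Theorem~3.1 of \cite{Chung2025} shows that $(E^{<},E^{>})$ is a Morita equivalence between $UB^p(X)$ and $B_u^p(X)$ in the sense of Definition~\ref{def:morita}. $B_u^p(X)$ is unital, so its unit is a bounded approximate identity. $UB^p(X)$ has a bounded approximate identity by Theorem~\ref{thm:UBp-bai-projections}. Hence, for an arbitrary closed ideal $I\triangleleft B_u^p(X)$, the ideal $J=\Ind_{B_u^p(X)}^{UB^p(X)}(I)$ is defined, and Theorem~\ref{thm:quotient-morita} applies with $A=UB^p(X)$ and $B=B_u^p(X)$. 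It says that the quotient bimodules $E^{>}/E_I^{>}$ and $E^{<}/{}_IE^{<}$, with the induced pairings, form a Morita equivalence between $UB^p(X)/J$ and $B_u^p(X)/I$.

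Second, for the statement about geometric ideals, I take $I$ to be geometric. Then $J$ is geometric by Theorem~\ref{thm:roe-geometric-correspondence}. Conversely, every geometric $J$ arises this way, since Morita induction is a bijection by Theorem~\ref{thm:ideal-lattice} and geometricity is reflected by Theorem~\ref{thm:roe-geometric-correspondence}. So every corresponding pair of geometric ideals is one of the pairs already covered by the first step.

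There is no real obstacle. The only point needing care is the bounded approximate identity hypothesis on $UB^p(X)$, which is exactly what Section~6 supplies. The rest is bookkeeping about which algebra plays the role of $A$ and which plays $B$ in the Section~2 conventions.
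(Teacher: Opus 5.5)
Your proposal is correct and matches the paper's proof, which simply applies Theorem~\ref{thm:quotient-morita} to the Morita equivalence of \cite{Chung2025} with $A=UB^p(X)$ and $B=B_u^p(X)$; the bounded approximate identity hypotheses come from the unit of $B_u^p(X)$ and from Theorem~\ref{thm:UBp-bai-projections}. Your extra argument for the geometric case is harmless but unnecessary, since a corresponding pair of geometric ideals is in particular a corresponding pair of closed ideals.
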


\begin{proof}
This is Theorem~\ref{thm:quotient-morita} applied to the Morita equivalence from \cite{Chung2025}.
\end{proof}

\begin{rem}
The lattice of geometric ideals of $B_u^p(X)$ is determined by coarse geometry, while Morita induction transports this lattice to $UB^p(X)$.  In this sense, the geometric ideal structure of the uniform algebra is not obtained by a new direct analysis of its ideals, but by combining coarse control on the uniform Roe side with the finite-propagation compatibility of the Morita bimodules.
\end{rem}

\bibliographystyle{amsplain}
\bibliography{morita_induction_ideals}

\end{document}